\newtheorem{remark}{Remark}
\newtheorem{proposition}{Proposition}
\newcommand{\half}{\frac{1}{2}}
\newcommand{\energy}{\mathcal{E}}
\newcommand{\Amat}{\mathsf{A}}
\newcommand{\Imat}{\mathsf{I}}
\newcommand{\rd}{\mathrm{d}}
\newcommand{\Wass}{\mathcal{W}}
\newcommand{\bx}{{\boldsymbol{x}}}
\newcommand{\bw}{{\boldsymbol{w}}}
\newcommand{\bz}{{\boldsymbol{z}}}
\newcommand{\by}{{\boldsymbol{y}}}
\newcommand{\bv}{\boldsymbol{v}}
\newcommand{\bu}{\boldsymbol{u}}
\newcommand{\bm}{\boldsymbol{m}}
\newcommand{\bT}{\boldsymbol{T}}
\newcommand{\bs}{\boldsymbol{s}}
\newcommand{\RR}{{\mathbb{R}}}
\newcommand{\divergence}{\text{div}}
\newcommand{\diag}{\text{diag}}
\newcommand{\tr}{\text{tr}}
\newcommand{\np}{{n+1}}
\newcommand{\norm}[1]{ \| #1 \|}
\title[Deep JKO for general gradient flows]{
Deep JKO: time-implicit particle methods for general nonlinear gradient flows}
\begin{document}
\author[Lee]{Wonjun Lee}
\email{lee01273@umn.edu}
\address{School of Mathematics, University of Minnesota, MN 55455.}
\author[Wang]{Li Wang}
\email{liwang@umn.edu}
\address{School of Mathematics, University of Minnesota, MN 55455.}
\author[Li]{Wuchen Li}
\email{wuchen@mailbox.sc.edu}
\address{Department of Mathematics, University of South Carolina, Columbia, SC 29208.}

\date{}
\thanks{W. Lee acknowledges support from the National Institute of Standards and Technology (NIST) under award number 70NANB22H021 and partially supported by AFOSR YIP award No. FA9550-23-1-0087. L. Wang acknowledges support from NSF grant DMS-1846854. W. Li's work is supported by AFOSR MURI FP 9550-18-1-502, AFOSR YIP award No. FA9550-23-1-0087, NSF RTG: 2038080, and NSF DMS-2245097. L. Wang and W. Li would like to express their gratitude for the hospitality extended by AIM, where part of the work was discussed. }
\maketitle
\begin{abstract}
    We develop novel neural network-based implicit particle methods to compute high-dimensional Wasserstein-type gradient flows with linear and nonlinear mobility functions. The main idea is to use the Lagrangian formulation in the Jordan--Kinderlehrer--Otto (JKO) framework, where the velocity field is approximated using a neural network. We leverage the formulations from the neural ordinary differential equation (neural ODE) in the context of continuous normalizing flow for efficient density computation. Additionally, we make use of an explicit recurrence relation for computing derivatives, which greatly streamlines the backpropagation process. Our methodology demonstrates versatility in handling a wide range of gradient flows, accommodating various potential functions and nonlinear mobility scenarios. Extensive experiments demonstrate the efficacy of our approach, including an illustrative example from Bayesian inverse problems. This underscores that our scheme provides a viable alternative solver for the Kalman-Wasserstein gradient flow.
\end{abstract}

\section{Introduction}
Deep learning has revolutionized many areas, fundamentally reshaping fields such as natural language processing, visual recognition, and beyond \cite{lecun2015deep}. It has also emerged as the cornerstone of contemporary scientific computing. By harnessing the power of deep neural networks and their ability to approximate complex functions, it has become an invaluable tool for approximating solutions to partial differential equations (PDEs), particularly those of high dimensions. These equations often defy conventional computational techniques due to their complexity and dimensionality.

Several methodologies have employed neural network approximations for solving PDEs. One approach is the deep Ritz method, crafted upon the variational formulation of elliptic-type equations \cite{yu2018deep}. Another avenue is the deep backward stochastic differential equation (BSDE) method, based on the probabilistic and control-based formulation of parabolic equations \cite{han2018solving}. A third technique involves the deep Galerkin method \cite{sirignano2018dgm} and the physics-informed neural network (PINN) \cite{raissi2019physics}. These methodologies both hinge on the minimization of the ($L^2$) residual of the equation. Importantly, they possess the capability to be employed across a wide spectrum of equation types. In parallel, this paper aims to develop a versatile solver for a substantial class of nonlinear PDEs characterized by a gradient flow structure.

Gradient flows  describe the evolution of a function or a probability measure over time, with the rate of change at any point being proportional to the negative gradient of the energy with respect to the function's value. They were originally used to study physical or biological systems that naturally obey some form of least action law. However, they have since emerged as powerful tools in various domains, including sampling, variational inference, the theory of neural networks, and more.


To be more specific, we consider the continuity equation of the form:
\begin{equation}\label{gd}
\partial_t \rho = - \nabla_\bx \cdot (\rho \bv) := \nabla_\bx \cdot [\rho \nabla_\bx (U'(\rho) + V + W \ast \rho )], \quad \rho(0, \cdot) = \rho_0\,.
\end{equation}
Here, $\rho(t,\bx)$, with $\bx\in \mathbb{R}^d$, represents the particle density function, $\rho_0$ is the initial density function, $U(\rho) \in \mathbb{R}^1$ represents the internal potential, $V(\bx) \in \mathbb{R}^1$ is a drift potential, $W(\mathbf{x},\mathbf{y}) = W(\mathbf{y},\mathbf{x}) \in \mathbb{R}^1$ is an interaction potential involving $\mathbf{x}$ and $\mathbf{y}$ in $\mathbb{R}^d$, and $*$ denotes the convolution operator.
This equation can be viewed as the gradient flow of the energy functional 
\begin{equation} \label{eqn:energy}
\energy (\rho) = \int_{\RR^d} \left[U(\rho(\bx)) + V(\bx) \rho(\bx) \right]\rd \bx + \half \int_{\RR^d \times \RR^d} W(\bx-\by) \rho(\bx) \rho(\by) \rd \bx \rd \by 
\end{equation}
in Wasserstein-2 space, which is the space of probability measures equipped with Wasserstein-2 distance, denoted as $\Wass_2$.

Among all the recent endeavors in devising structure-preserving methodologies for \eqref{gd}, our focus in this paper will be exclusively on the Jordan-Kindelenr-Otto (JKO) scheme \cite{JKO}.  This scheme centers on the task of determining $\rho^{k+1} (\cdot) \approx \rho(t^{k+1}, \cdot)$ with $t^{k+1} = (k+1)\Delta t$, given the approximation $\rho^{k+1}(\cdot) \approx \rho(t^{k+1}, \cdot)$:
\begin{align} \label{JKO00}
    \rho^{k+1} \in \arg \min_\rho \{   {\Wass_2(\rho, \rho^k)}^2 +  2 \Delta t \energy(\rho)\}\,.
\end{align}
Here, $ \Wass_2(\rho, \rho^k)$ quantifies the Wasserstein-2 distance between $\rho$ and $\rho^k$. The scheme in \eqref{JKO00} can be conceptualized as a time-implicit variant of the fundamental gradient descent, with the expression on the right-hand side of \eqref{JKO00} being recognized as the Wasserstein proximal operator associated with energy functional $\energy$.

In general terms, there exist two methods for expressing the $\Wass_2$ distance, each leading to distinct implementations of \eqref{JKO00}. The first method is the Eulerian representation, which establishes a connection between two densities through a continuity equation involving a velocity field that minimizes kinetic energy during the transport. In contrast, the second method is grounded in the Lagrangian formulation \cite{CarrilloMatthesWolfram2021_lagrangian}. This approach explores the diffeomorphism that maps one density onto another, a technique that has also gained prominence in modern machine learning applications.

In this paper, we adopt the Lagrangian viewpoint by employing the particle method, coupled with principles drawn from neural ordinary differential equations (neural ODEs). To elaborate, starting from a set of particles $\{\bx_j^0\}$ that are i.i.d. samples from $\rho^0$, we seek a sequence of transformations governing the update of all particles' positions as follows: 
\[
\{\bx_j^0\} \xrightarrow{\bT^1} ~  \{\bx_j^1\}
    ~ \xrightarrow{\bT^2} ~  \{\bx_j^2\}
   \cdots  \,.
\]
Here, $\bx_j^k = \bT^k(\bx_j^{k-1})$. As a consequence, the density evolves in accordance with $\rho^k = \bT^k \sharp \rho^{k-1}$ for $k = 1, 2, \ldots$, where $\sharp$ denotes the pushforward operator. We then approximate each map $\bT^k$ by parameterizing the corresponding vector field using a deep neural network. Thanks to the Jacobi identity, this results in a simple update in estimating the density $\rho^k$; see \eqref{NODE} below.

More precisely, our primary formulation is as follows. To transit from time $t^k$ to $t^{k+1}$, we solve the following minimization problem: 
\begin{subequations}\label{DNJKO}
\begin{equation} \label{neural_JKO}
 \begin{split}
        \theta^*=&\arg\min_{\theta\in \Theta}  ~~\mathbb{E}_{\bx \sim \rho} \Big[  \int_0^1 |\bv_\theta (\tau, \bT(\tau,\bx ))|^2 \rd \tau 
    + 2 \Delta t  \Big(\frac{U(\rho(\bT(1,\bx)))}{\rho(\bT(1,\bx))}+V(\bT(1,\bx))\Big)\Big]\\
    &\hspace{2cm}+\Delta t\mathbb{E}_{(\bx,\by) \sim\rho\otimes \rho}\Big[W(\bT(1,\bx),\bT(1,\by))\Big], 
    \end{split} 
    \end{equation}
such that $\bT(\tau, x)$ satisfies the dynamical constraint:
    \begin{equation}\label{DNT}
        \\\frac{\rd }{\rd \tau} \bT(\tau, \bx) = \bv_\theta  (\tau, \bT(\tau,\bx)), \qquad \bT(0,\bx) = \bx, 
        \end{equation}
and the density $\rho(\bT(1,\bx))$ is computed as first evolving:
        \begin{equation}\label{NODE}
        \frac{\partial}{\partial \tau} \log \det |\nabla_{\bx} \bT(\tau, \bx)|= \divergence (\bv_\theta) (\tau, \bT(\tau,\bx))\,, \quad  \rho(\bT(0,\bx)) = \rho^k(\bx)\,,
    \end{equation}
followed by the pushforward relation:
 \begin{equation} \label{push3}
    \rho(\bT(1,\bx))= \rho^k(\bx)/ \det |\nabla_{\bx} \bT(1,\bx)| \,.  
 \end{equation}
\end{subequations}

In this problem, we employ Benamou-Brenier's dynamic formulation \cite{BB00} to calculate the Wasserstein distance and reformulate the continuity equation using the flow map $\bT(\tau, \bx)$, with $\tau$ serving as an artificial time. The associated velocity field is denoted as $\bv_\theta$, where $\theta\in\Theta$ represents the parameters of a neural network. The notation $\mathbb{E}_{\bx\sim \rho}$ denotes the expectation value for $\bx$ distributed according to the probability density $\rho$, and $\mathbb{E}{(\bx,\by)\sim \rho\otimes\rho}$ represents the expectation over pairs of independent variables $(\bx,\by)$, where $\bx$ and $\by$ are independent and satisfy the joint density $\rho\otimes\rho$. 
Equation \eqref{NODE} is a significant outcome from the perspective of neural ODEs, providing a straightforward means to compute $\det |\nabla_{\bx} \bT(\tau, \bx)|$. Consequently, this facilitates an efficient computation of density as outlined in \eqref{push3}.

It's crucial to highlight that the proposed method goes beyond the current scope of merely approximating Wasserstein gradient flows.
Similar approaches, as outlined in \eqref{DNJKO}, can be employed for general gradient flows, including nonlinear mobility Wasserstein and Kalman-Wasserstein gradient flows. In such cases, the identity \eqref{NODE}, utilized in neural ODE density estimations, takes on even greater significance. This is due to the fact that the objective function in \eqref{DNJKO} becomes dependent on the density, introducing additional nonlinearities into the optimization objective (loss function), see \eqref{Mob-JKO} in the subsequent context. 


The exploration of computing \eqref{gd} using the JKO scheme has experienced rapid growth. Previous endeavors employed finite difference or finite volume schemes \cite{BCL16, LiWang22, LiLuWang20, wang2022hessian, CarrilloWangWei2023_structurea, FuOsherLi, cances2020variational}, as well as Lagrangian methods \cite{CarrilloMatthesWolfram2021_lagrangian}. More recently, more efforts have been directed towards developing machine learning-based methods, particularly in tackling high-dimensional challenges. In this context, neural networks have been utilized to approximate maps or density functions \cite{NEJKO,fan2021variational,mokrov2021large, HuLiuWangXu2022_energetic,  shen2022self, boffi2023probability}. Furthermore, a variety of methods have arisen to approximate different formulations of one-step JKO schemes. These range from leveraging generative adversarial networks \cite{WPGAN} to employing neural ODEs \cite{RuthottoOsherLiNurbekyanFung2020_machinee}. These approaches primarily target the sampling of high-dimensional target distributions in machine learning applications \cite{VidalWuFungTenorioOsherNurbekyan2023_taminga,XuChengXie2022_invertiblea}, manifesting as time-implicit updates of the projected dynamics within neural network spaces, specifically the natural gradient dynamics \cite{WPGAN}.

Contrasting previous methodologies, our approach employs neural ODE techniques to compute the time-implicit update of generalized Wasserstein-type gradient flows. This innovative strategy facilitates the streamlined computation of high-dimensional densities in Lagrangian coordinates.  It is worth highlighting the similarities and disparities between our approach and its closest counterparts. One such sibling is the blob method \cite{blob}. While both approaches rely on particles and intrinsically exhibit energy dissipation, the dissipation in the blob method hinges on the chosen time step, whereas ours remains unconditionally stable.
Furthermore, our method eliminates the need for kernel density estimation, a significant computational bottleneck present in the blob method. 
Another approach involves the direct use of a neural network to approximate the mapping between $\rho^k$ and $\rho$ in \eqref{JKO00} \cite{mokrov2021large, HuLiuWangXu2022_energetic}. This approach can be seen as a special case of our method when the inner time step $\tau$ is set to 1. A critical distinction, however, lies in our utilization of the density evolution equation \eqref{NODE} from the perspective of neural ODEs and the explicit formula \eqref{grad-formula} for computing derivatives, which significantly enhances the efficiency of density estimation. Furthermore, our approach can be readily extended to accommodate the nonlinear mobility case.
A third comparable technique is the self-consistent velocity matching approach \cite{boffi2023probability,shen2022self}. In this method, neural networks are also employed to approximate the velocity field. However, instead of sequentially learning it within the JKO formulation as we do, this approach adopts an end-to-end learning strategy. While this method might lead to faster computation, it doesn't inherently preserve energy decay in general.

This paper is organized as follows. In section \ref{sec2}, we review the general formulation of Wasserstein gradient flows in Lagrangian coordinates and introduce the Lagrangian Wasserstein proximal operator and its particle version for time-implicit (i.e., JKO) updates of gradient flows. We expand our framework to include gradient flows characterized by metrics reliant on general nonlinear mobility functions.  Section \ref{sec3} outlines our proposition to approximate the pushforward map using the trajectory of neural ODEs. This approach empowers us to employ neural networks with diverse depths while simultaneously sidestepping the computationally intensive task of back-propagation.
To illustrate the effectiveness and versatility of our approach, we present a series of numerical examples in Section \ref{sec4}, including Fokker-Planck equations, porous media equations, Kalman-Wasserstein gradient flow of Kullback–Leibler (KL)  divergence, and nonlinear mobility gradient flows. Finally, the paper concludes in Section \ref{sec5} with an overview of our findings and insights into future research directions.

\section{Lagrangian JKO formulation}\label{sec2}
In this section, we present the main formulation of this paper. We first review the Lagrangian coordinates of Wasserstein gradient flows and then introduce our variational time-implicit schemes. One major component of our formulation is to use instantaneous change of variable formula (see \eqref{push2}) in the continuous normalizing flow to achieve efficient density computation. We also consider the generalized nonlinear gradient flows, in which the Wasserstein-type metric is associated with nonlinear mobility functions.

\subsection{Dynamic JKO schemes in Lagrangian coordinates}
Recall the dynamic JKO formulation to \eqref{gd} as follows. 
Given $\rho^n(\bx)$, then one can obtain $\rho^{n+1}(\bx)$ as $\rho(1,\bx)$ with $\rho(t,\bx)$ solving 
\begin{equation} \label{classicalJKO}
\begin{dcases}
& (\rho,\bv)=\arg\inf_{ \rho,\bv}~ \int_0^1\int_{\RR^d} \rho |\bv|^2 \rd \bx \rd \tau  + 2\Delta t\energy(\rho(1,\cdot))
\\ & \textrm{s.t.} \quad  \partial_\tau \rho + \nabla \cdot (\rho \bv) = 0, ~ \rho(0,\bx) = \rho^n (\bx)\,.
\end{dcases}
\end{equation}
In the above formulation, the minimization is over the probability density function $\rho(t,\bx)$, $t\in(0,1)$, terminal time density $\rho(1,\bx)$, and vector fields $v(t,\bx)$, such that the continuity equation holds.  

Now we translate the variational problem \eqref{classicalJKO} into the Lagrangian formulation. Assume the velocity field is sufficiently regular, then the solution $\rho(\tau,\bx)$ to the constrained continuity equation can be written as 
\begin{align} \label{push0}
    \rho(\tau,\cdot) = \bT(\tau, \cdot) \sharp \rho^n\,, 
\end{align}
where $\bT$ is the flow map that solves the following ODE:
\begin{equation}\label{ODE}
    \frac{\rd }{\rd \tau} \bT(\tau, \bx) = \bv (\tau, \bT(\tau,\bx)), \qquad \bT(0,\bx) = \bx \,.
\end{equation}
Note that \eqref{push0} implies that for any integrable test function $\phi$, we have that
\begin{align*}
    \int_{\RR^d} \phi(\bx) \rho(\tau, \bx) \rd \bx 
    = \int_{\RR^d} \phi(\bT(\tau,\bx)) \rho^n (\bx) \rd \bx\,.
\end{align*}
Then variational problem \eqref{classicalJKO} rewrites as 
\begin{equation} \label{JKO-Lag}
    \begin{dcases}
    \min_{\bv} \int_0^1 \int_{\RR^d} \rho^n(\bx ) |\bv(\tau, \bT(t,\bx))|^2 \rd \tau \rd \bx +  2 \Delta t \energy(\bT(1, \cdot) \sharp \rho^n)
    \\ \text{s.t.} ~~\frac{\rd}{\rd \tau} \bT(\tau, \bx) = \bv (\tau, \bT(\tau,\bx)), \qquad \bT(0,\bx) = \bx \,.
    \end{dcases}
\end{equation}

To proceed, it is necessary to derive an explicit formulation of $\energy(\bT(1, \cdot) \sharp \rho^n)$ that can be interpreted as an expectation of a functional, allowing for its representation using particles in the subsequent discussion. In the following, we present a comprehensive formulation for each component of equation \eqref{eqn:energy}. Specifically, the external potential and interaction potential have straightforward expectation formulations through the push forward relations, whereas the internal energy necessitates further reformulation.

\subsubsection*{External potential energy}
\begin{align} \label{eng-ext}
    \int_{\RR^d}  V(\bx) \rho(1,\bx) \rd \bx = \int_{\RR^d} V(\bT(1,\bx)) \rho^n(\bx) \rd \bx\,.
\end{align}

\subsubsection*{Interaction energy}
\begin{align} \label{eng-interact}
    & \int_{\RR^d \times {\RR^d}}  W(\bx,\by) \rho(1,\bx) \rho(1,\by) \rd \bx \rd \by \nonumber 
   \\ & \qquad  = \int_{\RR^d \times {\RR^d}} W(\bT(1,\bx), \bT(1, \by)) \rho^n(\bx) \rho^n(\by)\rd \bx \rd \by\,.
\end{align}

\subsubsection*{Internal energy/entropy} 
\begin{align} \label{eng-internal}
     & \int_{\RR^d} U(\rho(1,\bx)) \rd \bx  \nonumber
    \\ & = \int_{\RR^d} U(\rho(1, \bT(1,\bx))) \det |\nabla_{\bx} \bT(1,\bx)|  \rd \bx \nonumber
    \\ &  = \int_{\RR^d} U (\rho(1, \bT(1,\bx))) 
    \frac{\det |\nabla_{\bx} \bT(1,\bx)|}{\rho^n(\bx)} \rho^n(\bx)  \rd \bx \nonumber
    \\ & = \int_{\RR^d} U (\rho(1, \bT(1,\bx))) 
   \frac{1}{\rho(1,\bT(1,\bx))} \rho^n(\bx)  \rd \bx \,,
\end{align}
where we have used the Monge-Ampere equation
\begin{equation} \label{push1}
   \rho(1,\bT(1,\bx)) \det |\nabla_{\bx} \bT(1,\bx)| = \rho^n(\bx) 
\end{equation}
thanks to \eqref{push0}. In practice, computing the determinant of the Jacobian $\nabla_{\bx} \bT(1,\bx))$ poses a significant bottleneck due to its cubic cost with respect to the dimension of $\bx$. To address this issue, we draw inspiration from the concept of continuous normalizing flow and employ an efficient approach based on the instantaneous change of variable formula \cite{chen2018neural, grathwohl2018ffjord}. The crucial element is the following formula.
\begin{proposition}
\begin{align} \label{push2}
\frac{\rd}{\rd \tau} \log \det |\nabla_{\bx} \bT(\tau, \bx)|= \divergence (\bv) (\tau, \bT(\tau,\bx)).
\end{align}
\end{proposition}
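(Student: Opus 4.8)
The plan is to reduce the identity to Jacobi's formula for the time-derivative of a determinant. First I would introduce the Jacobian matrix of the flow map, $J(\tau, \bx) := \nabla_{\bx} \bT(\tau, \bx)$, and record its initial value: since $\bT(0,\bx) = \bx$ by \eqref{ODE}, we have $J(0,\bx) = \Imat$ and hence $\det J(0,\bx) = 1 > 0$. Because the velocity field is assumed sufficiently regular, $\bT(\tau, \cdot)$ is a diffeomorphism for each $\tau$, so $\det J$ never vanishes and therefore remains positive by continuity in $\tau$. This lets me drop the absolute value and treat $\log \det |J|$ as $\log \det J$ throughout the argument.

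The key computation is to derive a linear matrix ODE for $J$. Differentiating the flow equation \eqref{ODE} with respect to the initial position $\bx$ and interchanging the order of the $\tau$- and $\bx$-derivatives, the chain rule yields
\begin{equation*}
\frac{\rd}{\rd \tau} J(\tau, \bx) = (\nabla \bv)(\tau, \bT(\tau, \bx))\, J(\tau, \bx),
\end{equation*}
where $(\nabla \bv)$ denotes the spatial Jacobian of $\bv$ evaluated along the trajectory $\bT(\tau,\bx)$.

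Next I would apply Jacobi's formula, which for an invertible matrix reads $\frac{\rd}{\rd \tau} \det J = (\det J)\, \tr\!\big(J^{-1} \tfrac{\rd}{\rd \tau} J\big)$. Substituting the matrix ODE above and using the cyclic invariance of the trace gives
\begin{equation*}
\tr\!\big(J^{-1} (\nabla \bv)\, J\big) = \tr(\nabla \bv) = \divergence(\bv),
\end{equation*}
so that $\frac{\rd}{\rd \tau} \det J = (\det J)\,\divergence(\bv)(\tau, \bT(\tau,\bx))$. Dividing by $\det J$, which is positive and hence nonzero, produces the claimed formula $\frac{\rd}{\rd \tau} \log \det J = \divergence(\bv)(\tau, \bT(\tau,\bx))$.

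The algebraic core is thus a one-line consequence of the cyclic trace property, and the chain rule together with Jacobi's formula are entirely routine. I expect the only delicate point to be the regularity bookkeeping: justifying the interchange of the $\tau$- and $\bx$-derivatives in passing to the matrix ODE, and the invertibility and positivity of $J$. Both follow from the standing assumption that $\bv$ is smooth enough for \eqref{ODE} to define a well-posed flow of diffeomorphisms, so I anticipate no substantive obstacle beyond stating these hypotheses cleanly.
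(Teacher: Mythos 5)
Your proposal is correct and follows essentially the same route as the paper's proof: both rest on Jacobi's formula for the derivative of the (log) determinant, the interchange of the $\tau$- and $\bx$-derivatives to invoke the flow ODE, and the chain rule with cyclic invariance of the trace to reduce $\tr\bigl(J^{-1}(\nabla \bv)J\bigr)$ to $\divergence(\bv)$. The only difference is cosmetic — you state the matrix ODE for $J$ and the positivity of $\det J$ explicitly before applying Jacobi's formula, while the paper differentiates $\log\det|\nabla_\bx \bT|$ directly under the assumption that $\bT$ is invertible — which is a slightly more careful bookkeeping of the same argument.
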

\begin{proof}
Equation \eqref{push2} is derived through a calculus that involves both ODE \eqref{ODE} and the Monge-Ampere equation \eqref{push1}. Assume that $\bT(\tau, \bx)$ is invertible, i.e. $\mathrm{det}|\nabla_\bx \bT(\tau,\bx)|\neq 0$, then
\begin{equation*}
\begin{split}
\frac{\rd}{\rd \tau} \log \det |\nabla_{\bx} \bT(\tau, \bx)|=&\mathrm{tr}\Big((\nabla_\bx \bT(\tau, \bx))^{-1}\frac{\rd}{\rd \tau}\nabla_\bx \bT(\tau,\bx)\Big)\\
 =&  \mathrm{tr}\Big((\nabla_\bx \bT(\tau, \bx))^{-1}\nabla_x\frac{\rd }{\rd \tau} \bT(\tau,\bx)\Big)\\
  =&  \mathrm{tr}\Big((\nabla_\bx \bT(\tau, \bx))^{-1}\nabla_x \bv(\tau, \bT(\tau,\bx))\Big)\\
  =& \divergence (\bv) (\tau, \bT(\tau,\bx)),
\end{split}
\end{equation*}
where the first equality uses the Jacobi identity, and the last equality uses the chain rule. 
\end{proof}

As a result, the computational burden shifts towards calculating $\divergence (\bv) (\tau, \bT(\tau,\bx))$, which only involves a trace calculation and can be accomplished much more efficiently. In sum, \eqref{push1} and \eqref{push2} play a crucial role in enabling efficient density estimation, as elaborated in the next section and Section~\ref{sec:learn0}.

\subsection{A Particle method}
By interpreting the integral against $\rho(\bx) \rd \bx$ as an expectation, \eqref{JKO-Lag} reveals a direct particle representation. More precisely, let $\{ \bx_j^n\}_{j=1}^{N}$ be particles sampled from $\rho^n(\bx)$, we discretize \eqref{JKO-Lag} as follows. Note here we omit the superscript $n$ in $\bx_j^n$. 
\begin{itemize}
\item[Case 1]: \eqref{JKO-Lag} with \eqref{eng-ext} and/or \eqref{eng-interact}
\begin{equation} \label{JKO-NN1}
    \begin{dcases}
    \min_{\bv}  \frac{1}{N}\sum_{j=1}^N \left[  \int_0^1 |\bv (\tau, \bT(\tau,\bx_j ))|^2 \rd \tau  + 2 \Delta t V(\bT(1, \bx_j)) + 2 \Delta t \sum_{l=1} ^N W(\bT(1,\bx_j), \bT(1, \bx_l))\right]  
    \\ \text{s.t.} ~~\frac{\rd}{\rd \tau} \bT(\tau, \bx_j) = \bv  (\tau, \bT(\tau,\bx_j)), \qquad \bT(0,\bx_j) = \bx_j \,.
    \end{dcases}
\end{equation}

\item[Case 2]: \eqref{JKO-Lag} with \eqref{eng-internal}
\begin{equation} \label{JKO-NN2}
    \begin{dcases}
    \min_{\bv}  \frac{1}{N}\sum_{j=1}^N \left[  \int_0^1 |\bv (\tau, \bT(\tau,\bx_j ))|^2 \rd \tau  + 2 \Delta t  U\left( \frac{\rho^n(\bx_j)}{\det |\nabla_{\bx} \bT(1,\bx_j)|}\right)\frac{\det |\nabla_{\bx} \bT(1,\bx_j)|}{\rho^n(\bx_j)} \right]  
    \\ \text{s.t.} ~~\frac{\rd }{\rd \tau} \bT(\tau, \bx_j) = \bv  (\tau, \bT(\tau,\bx_j)), \qquad \bT(0,\bx_j) = \bx_j 
    \\ \qquad    \frac{\partial}{\partial \tau} \log \det |\nabla_{\bx} \bT(\tau, \bx_j)|= \divergence (\bv) (\tau, \bT(\tau,\bx_j))\,, \quad \log \det |\nabla_{\bx} \bT(0, \bx_j)|=0\,,
    \end{dcases}
\end{equation}
\end{itemize}
In cases where the energy solely comprises potential and interaction components, it suffices to monitor the particle positions. However, when internal energy is introduced, it becomes necessary also to track the density $\rho$, which is accomplished by monitoring the logarithmic determinant of the transport map.

More precisely, denote the minimizer of either problem $\bT^n$, then 
\[
\bT^n(1, \bx_j^n) =: \bx_j^\np\,,
\]
which can be viewed as samples from $\rho^\np(\bx)$. Therefore, starting from $\{\bx_j^0\}$, we have a sequence of update
\begin{align}\label{xtime}
     \{\bx_j^0\} \xrightarrow{\bT^1} ~  \{\bx_j^1\}
    ~ \xrightarrow{\bT^2} ~  \{\bx_j^2\}
    ~ \xrightarrow{\bT^3} ~  \{\bx_j^3\} \cdots  \,,
\end{align}
where $\bx_j^k = \bT^k(1, \bx_j^{k-1})$. Likewise, the density evolves as 
\begin{align}\label{Ttime}
    \rho^0 \xrightarrow{\bT^1} ~ \rho^1
    ~ \xrightarrow{\bT^2} ~ \rho^2 
    ~ \xrightarrow{\bT^3} ~ \rho^3 \cdots\,,
\end{align}
where $\rho^k = \bT^k(1,\cdot) \sharp  \rho^{k-1}$.

In practice, since we only evolve particles, a major difficulty in \eqref{Ttime} lies in the {\it density estimation} of $\rho^n$, which can be very expensive and inaccurate in high dimensions. To resolve this issue, a key observation is that we don't need the full information of the density, but only the density evaluated along the trajectory of particles! 

Suppose we are given a set of samples ${\mathbf{x}_j^0}$ drawn from the known analytical expression of $\rho_0$. In the first JKO step, we can compute the updated density as follows:
\begin{align} \label{density1}
 \rho^1(\underbrace{\bT^1(1,\bx_j^0)}_{\bx_j^1}) = \frac{\rho^0(\bx_j^0)}{\det|\nabla_\bx \bT^1(1,\bx_j^0)|}\,,   
\end{align}
where $\bT^1$ represents the map obtained in the first JKO step. Here, $\bx_j^1$ corresponds to the transformed sample after applying $\bT^1$ to $\bx_j^0$. Following a similar procedure, we obtain the density in the subsequent JKO steps:
\begin{align} \label{density2}
 \rho^2(\underbrace{\bT^2(1,\bx_j^1)}_{\bx_j^2}) = \frac{\rho^1(\bx_j^1)}{\det|\nabla_\bx \bT^2(1,\bx_j^1)|}\,,
\end{align}
and this iterative process can be continued for any subsequent JKO step. 

\begin{remark}
We emphasize that this observation holds true for general nonlinear internal potential functions, denoted as $U$. In the existing literature, $U$ is commonly assumed to be the negative Boltzmann-Shannon entropy, given by $U(z) = z\log z$. In this particular case, the aforementioned difficulty can be circumvented. Indeed, since 
\begin{equation*}
   U\left( \frac{\rho^n(\bx_j)}{\det |\nabla_{\bx} \bT(1,\bx_j)|}\right)\frac{\det |\nabla_{\bx} \bT(1,\bx_j)|}{\rho^n(\bx_j)}= \log\rho^n(\bx_j)-\log \mathrm{det}|\nabla_x \bT(1,\bx_j)|,
\end{equation*}
we can simplify the computation by avoiding the need to directly calculate $\rho^n(\bx_j)$. This is because $\log\rho^n(\bx_j)$ is separated from  $\log \det (\nabla_{\bx} \bT(1, \bx_j))$ in equation \eqref{JKO-NN2}, enabling us to exclude it from the calculation. However, for general potential functions, the computation of $\rho^n(\bx_j)$ cannot be circumvented.
\end{remark}

\subsection{Nonlinear mobility}
In this section, we extend the previous method to the generalized gradient flow by considering the following equation:
\begin{equation} \label{nonM}
   \partial_t \rho = \nabla_\bx  \cdot \left(M(\rho) \nabla_\bx \frac{ \delta \energy(\rho)}{\delta \rho} \right)\,,
\end{equation}
Here, the function $M(\rho) \geq 0$ represents a nonlinear mobility function. This type of equation appears in various contexts, such as thin films \cite{bertozzi1998mathematics},  phase separation in binary alloys described by the Cahn-Hilliard equation \cite{cahn1961spinodal}, and the transport phenomena of biological systems with overcrowding prevention \cite{burger2006keller}, among others.

For general mobility function $M(\rho)$, equation \eqref{nonM} can also be viewed as a  gradient flow. The generalized metric also admits a dynamic formulation, leading to the following extension of the dynamic JKO scheme \eqref{classicalJKO}: 
\begin{equation} \label{nonM_JKO}
\begin{dcases}
& (\rho,\tilde \bv)=\arg\inf_{ \rho, \tilde \bv}~ \int_0^1\int_{\RR^d} M(\rho) |\tilde \bv|^2 \rd \bx \rd \tau  + 2\Delta t\energy(\rho(1,\cdot))
\\ & \textrm{s.t.} \quad  \partial_\tau \rho + \nabla \cdot (M(\rho) \tilde \bv) = 0, ~ \rho(0,\bx) = \rho^n (\bx)\,.
\end{dcases}
\end{equation}
In the above formulation, the minimization is over density function $\rho(t,\bx)$, $t\in(0,1)$ terminal time density $\rho(1,\bx)$, and vector fields $\tilde v(t,\bx)$, such that the nonlinear mobility induced continuity equation holds.  

Let $\bv = \frac{M(\rho)}{\rho} \tilde \bv$, equation \eqref{nonM_JKO} can be rewritten into 
\begin{equation} \label{nonM_JKO2}
\begin{dcases}
& (\rho, \bv)=\arg\inf_{ \rho, \tilde \bv}~ \int_0^1\int_{\RR^d} \frac{\rho^2}{M(\rho)} | \bv|^2 \rd \bx \rd \tau  + 2\Delta t\energy(\rho(1,\cdot))
\\ & \textrm{s.t.} \quad  \partial_\tau \rho + \nabla \cdot (\rho \bv) = 0, ~ \rho(0,\bx) = \rho^n (\bx)\,.
\end{dcases}
\end{equation}
This reformulation states that the minimization is constrained by the classical continuity equation. It is worth noting that such reformulation does not change the optimizer of the problem. Indeed, let $\phi$ be the Lagrangian multiplier, then one can check that the critical point system of variational problems \eqref{nonM_JKO} and \eqref{nonM_JKO2} lead to the same Hamilton-Jacobi equation for $\phi$:
\begin{align*}
    \partial_t \phi + \frac{1}{4} M'(\rho) |\nabla \phi|^2 = 0\,.
\end{align*}

We note that variational problem $\eqref{nonM_JKO2}$ now admits a similar Lagrangian representation as in $\eqref{JKO-Lag}$, with the only distinction being the first term, which now becomes:
\begin{align*}
    \int_0^1  \int_{\RR^d} \frac{\rho^2(\tau,\bx)}{M(\rho(\tau,\bx))}|\bv(\tau,\bx)|^2  \rd \tau \rd \bx \,.
\end{align*}
By employing the same technique as utilized in deriving \eqref{eng-internal}, the integral with respect to $\mathbf{x}$ in the given expression rewrites as
\begin{align} \label{non1}
     & \int_{\RR^d} \frac{\rho^2(\tau,\bx)}{M(\rho(\tau,\bx))}|\bv(\tau,\bx)|^2    \rd \bx  \nonumber
    \\  =& \int_{\RR^d} \frac{\rho^2(\tau,\bT(\tau,\bx))}{M(\rho(\tau,\bT(\tau,\bx)))} |\bv(\tau,\bT(\tau,\bx))|^2 \det |\nabla_{\bx} \bT (\tau, \bx)|  \rd \bx \nonumber
    \\   =& \int_{\RR^d} \frac{\rho^2(\tau,\bT(\tau,\bx))}{M(\rho(\tau,\bT(\tau,\bx)))} 
    |\bv(\tau,\bT(\tau,\bx))|^2
    \frac{\det |\nabla_{\bx} \bT(\tau, \bx)|}{\rho^n(\bx)} \rho^n(\bx)  \rd \bx \nonumber
    \\ = &\int_{\RR^d} \frac{\rho^2(\tau,\bT(\tau,\bx))}{M(\rho(\tau,\bT(\tau,\bx)))} 
   \frac{|\bv(\tau,\bT(\tau,\bx))|^2}{\rho(\tau,\bT(\tau,\bx))} \rho^n(\bx)  \rd \bx \nonumber
    \\ = &\int_{\RR^d} \frac{\rho(\tau,\bT(\tau,\bx))}{M(\rho(\tau,\bT(\tau,\bx)))} 
    |\bv(\tau,\bT(\tau,\bx))|^2 \rho^n(\bx)  \rd \bx\,.
\end{align}
As in \eqref{density1}, the density $\rho(\tau,\mathbf{T}(\tau,\mathbf{x}))$ is determined by the Monge-Amper{\'e} equation:
\begin{align*} 
 \rho(\tau, \bT(\tau,\bx)) = \frac{\rho^n(\bx)}{\det|\nabla_\bx \bT(\tau,\bx)|}.   
\end{align*}
Similar to the JKO scheme presented in \eqref{JKO-NN1} or \eqref{JKO-NN2}, given $\{\bx_j^n\}_j$ and $\{\rho^n(\bx^n_j)\}_j$ (we omit the superscript $n$ in $\bx_j^n$ in the following formula), the updated formulation can now be expressed as follows:
\begin{equation} \label{Mob-JKO}
    \begin{dcases}
    \min_{\bT}  \frac{1}{N}\sum_{j=1}^N \left[  \int_0^1  \frac{\rho( \bT(\tau,\bx_j)) }{M (\rho(\bT(\tau,\bx_j))) }|\bv (\tau, \bT(\tau,\bx_j ))|^2 \rd \tau  + 2 \Delta t  \frac{U(\rho(\bT(1,\bx_j)))}{\rho(\bT(1,\bx_j))}\right]  
    \\ \text{s.t.} ~~\frac{\rd}{\rd \tau} \bT(\tau, \bx_j) = \bv  (\tau, \bT(\tau,\bx_j)), \qquad \bT(0,\bx_j) = \bx_j 
    \\  \qquad  \frac{\partial}{\partial \tau} \log \det |\nabla_{\bx} \bT(\tau, \bx_j)|= \divergence (\bv) (\tau, \bT(\tau,\bx_j))\,, \quad \log \det |\nabla_{\bx} \bT(0, \bx_j)|=0
    \\ \qquad  \rho(\tau, \bT(\tau,\bx_j)) = \frac{\rho^n(\bx_j)}{\det|\nabla_\bx \bT(\tau,\bx_j)|}. 
    \end{dcases}
\end{equation}
It is obvious that when $M(\rho) = \rho$, \eqref{Mob-JKO} reduces to \eqref{JKO-NN2}.

\section{Neural ODE empowered JKO schemes}\label{sec3}
In this section, we leverage neural network functions to approximate the vector field $\mathbf{v}$ as described in equations \eqref{JKO-NN1}, \eqref{JKO-NN2}, or, more broadly, equation \eqref{Mob-JKO}. As a result, the optimization procedure aims at refining the parameters
of the neural network. Building upon the efficiency gains achieved through the use
of continuous normalizing flow for density estimation, as previously discussed, we incorporate a recursive relation formula proposed in a prior work \cite{onken2021ot} to compute derivatives, significantly improving the efficiency of backpropagation. Lastly, we conclude this section by presenting a concise summary of the algorithm.

\subsection{Learning the potential function}\label{sec:learn0}
In view of \eqref{JKO-NN1} and \eqref{JKO-NN2}, the primary objective is to determine the optimal transport map, denoted as $\bT$. However, this task can be computationally demanding, especially in high-dimensional scenarios. Fortunately, by applying the principles of optimal transport \cite{gangbo1996geometry, villani2009optimal}, we can express the velocity field $\mathbf{v}(t,\bx)$ as the gradient of a potential function $\phi(t,\bx)$ such that $\mathbf{v} (t,\bx)= \nabla_{\mathbf{x}} \phi (t,\bx)$. Consequently, our focus shifts to finding the potential function $\phi$. To achieve this, we utilize a neural network to approximate $\phi(t,\bx)$, denoted as $\phi_\theta(t,\bx)$. As a result, we obtain $\mathbf{v}_\theta(t,\bx) = \nabla_{\mathbf{x}} \phi_\theta(t,\bx)$ as the corresponding approximation of the velocity field.

To be more specific, let us consider solving equation \eqref{JKO-NN1}. By substituting $\bv$ with $\bv_\theta = \nabla_\bx \phi_\theta$, the constraint in \eqref{JKO-NN1}  becomes: 
\[
\frac{\rd}{\rd \tau} \bT(\tau, \bx_j) = \nabla_\bx \phi_\theta  (\tau, \bT(\tau,\bx_j)), \qquad \bT(0,\bx_j) = \bx_j. 
\]

Similarly, in the case of the more complex equation \eqref{JKO-NN2}, computing $\rho(\bT(1,\bx_j))$ calls for the following update:
\begin{align*}\label{0319}
    \frac{\partial}{\partial \tau} \log \left| \det(\nabla_\bx^2 \phi_\theta(\tau, \bT(\tau,\bx_j))) \right|
    = \divergence (\bv_\theta) (\tau, \bT(\tau,\bx_j)) = - \tr(\nabla_\bx^2 \phi_\theta(\tau, \bT(\tau,\bx_j))).
\end{align*}
Detailed computations of $\nabla^2\phi_\theta$ will be given in the next subsection.

\subsection{Deep JKO Algorithms}
We hereby provide comprehensive details for computing the minimizer of~\eqref{JKO-NN1}, \eqref{JKO-NN2} or \eqref{Mob-JKO}, when $\bv$ is replaced by $\bv_\theta$. Let $N_\tau$ denote the number of time discretization such that $[0,1]$ is discretized into $0, \frac{1}{N_\tau}, \frac{2}{N_\tau}, \cdots, , \frac{N_\tau-1}{N_\tau}$, and $\theta$ is a vector of parameters for neural network functions. Given the density $\rho^n$ at the $n$-th JKO step and sampled points $\{\mathbf{x}_j\}^N_{j=1}$, the loss function to compute the density at the $n+1$-th JKO step is defined as follows:
\begin{equation}\label{eq:discrete-NN}
\begin{aligned}
L(\theta, \{\mathbf{x}_j\}^N_{j=1}) = \frac{1}{N}\sum_{j=1}^N &\bigg[ d\tau \sum_{k=0}^{N_\tau-1} \left|\nabla_\bx \phi_\theta(k d\tau,\mathbf{z}_j^k) \right|^2\\
&+ 2 \Delta t \left(\frac{U(\rho(\mathbf{z}_j^{N_\tau}))}{\rho(\mathbf{z}_j^{N_\tau})}  + V(\mathbf{z}^{N_\tau}_j)  + \frac{1}{N}\sum^N_{l=1}  W(\mathbf{z}^{N_\tau}_j,\mathbf{z}^{N_\tau}_l)  \right)\bigg],    
\end{aligned}
\end{equation}
where the inner time step is denoted as $d\tau = 1/N_\tau$ and the update equations are given by:
\begin{align*}
\mathbf{z}_j^{k+1} &= \mathbf{z}_j^k - d\tau\, \nabla_\bx \phi_\theta(k d\tau, \mathbf{z}_j^k), \quad \mathbf{z}_j^0 = \mathbf{x}_j, \\
l^{k+1}_j &= l^k_j - d\tau \,\tr ( \nabla_\bx^2 \phi_\theta(k d\tau, \mathbf{z}_j^k )), \quad l^0_j = \log \left|\det\left(\nabla_\bx^2 \phi_\theta(k d\tau, \mathbf{z}_j^k ) \right)\right|,
\end{align*}
for $k=0,\cdots, N_\tau-1$ and the terminal density $\rho$  at the location $\bz^{N_\tau}_j$ is defined  as
\begin{equation}\label{eq:update-density}
    \rho(\bz^{N_\tau}_j) = \frac{\rho^n(\mathbf{x}_j)}{\exp(l^{N_\tau}_j)}. 
\end{equation}
The above equations for $\mathbf{z}_j^{k+1}$ and $\log\rho(\mathbf{z}_j^{k+1})$ utilize the forward Euler method to solve the ordinary differential equations (ODEs) in the constraints of~\eqref{JKO-NN2}. However, alternative ODE solvers such as Runge-Kutta can also be employed. For our numerical experiment, we utilize the RK4 as the ODE solver.

The loss function, denoted as $L(\theta,  \{\bx_j\}^N_{j=1})$, takes two inputs: the neural network parameter $\theta$ and a set of sampled points $\{\bx_j\}^N_{j=1}$. The objective is to minimize this loss function with respect to $\theta$. This $\theta$ plays a vital role in approximating the potential function $\phi_\theta(\bx, \tau)$, which relies on two variables: a state vector $\bx \in \mathbb{R}^d$ and a time value $t\in[0,1]$. The velocity is computed as:
\[
    v_\theta(\tau,  \bx) = - \nabla_\bx \phi_\theta(\tau, \bx),
\]
and its divergence is obtained as the trace of the Hessian:
\[
    \divergence(v_\theta)(\tau, \bx) = - \tr(\nabla_\bx^2 \phi_\theta(\tau, \bx)).
\]

To simplify the calculation of gradients and Hessians in the formulas above, we utilize the explicit expression introduced in \cite{onken2021ot}. This approach capitalizes on the smoothness of the activation function within the neural network. Consequently, our optimization process experiences significant acceleration, as we no longer rely on automatic differentiation (AD) for backpropagation to compute gradients and Hessians in each iteration.

More precisely, we adopt a multi-layer ResNet that takes the input $\mathbf{s} = (\tau, \mathbf{x}) \in \mathbb{R}^{d+1}$, where $\mathbf{x} \in \mathbb{R}^d$ represents the initial state, and $\tau \in [0,1]$ denotes time, yielding an output in $\mathbb{R}$. The neural network function consists of $L$ hidden layers with recursive relations defined as follows:
\begin{align*}
\mathbf{u}_1 &= \sigma (W_0 \mathbf{s} + \mathbf{b}_0),\\
\mathbf{u}_{l+1} &= \mathbf{u}_{l} + \sigma (W_{l} \mathbf{u}_{l} + \mathbf{b}_{l}), \quad l=1,\cdots L-1,\\
NN(\mathbf{s};\theta) &= \bw^\top \mathbf{u}_{L}.
\end{align*}
Here, $\theta=(W_0, W_l,b_l)$, with $W_{0} \in \mathbb{R}^{m \times (d+1)}$ and $W_{l} \in \mathbb{R}^{m\times m}$ $(l=1,\cdots,L)$ are dense matrices. Additionally, $b_k\in\mathbb{R}^m$ $(l=0,\cdots, L)$ are biases, while $m$ denotes the number of nodes in each hidden layer and $\bw \in \mathbb{R}^m$ is a vector we perform dot product at the end of the ResNet. The element-wise activation function $\sigma(\mathbf{x}) = \log(\exp(\mathbf{x}) + \exp(-\mathbf{x}))$ is used, which is differentiable. The dual variable $\phi_\theta$ is defined through
\begin{align} \label{phi}
    \phi_\theta(\tau, \bx) = NN((\tau,\bx);\theta).
\end{align}
Denote by $\nabla_\bx$ a gradient operator with respect to the space variable $\bx$ and $\nabla_\mathbf{s}$ be a gradient operator with respect to the state vector $\mathbf{s}$. Both the gradient $\nabla_\bx \phi_\theta(\bx, \tau)$ and the Hessian $\nabla_\bx^2 \phi_\theta(\bx, \tau)$ can be effortlessly computed using the explicit formulas presented in~\cite{onken2021ot}. Specifically, for the case where $L=2$, these formulas are as follows:
\begin{subequations}\label{grad-formula}
\begin{equation}
    \nabla_{\mathbf{s}} \phi_\theta(\mathbf{x},\tau) = W_0^\top \diag(\sigma'(W_0 \bs + b_0)) \bz_1,
\end{equation}
\begin{equation}
    \bz_1 =  \bw + W_1^\top \diag(\sigma'(W_1\bu_1 + b_1))\bw,
\end{equation}
and
\begin{equation}
  \nabla_{\bs} ^2 \phi_\theta(\bx,\tau) = t_0 + t_1,   
\end{equation}
where
\begin{align*}
    t_0 &= W_0^\top \diag(\sigma'' (W_0 \bs + b_0) \odot \bz_1 ) W_0,\\
    t_1 &= J_0^\top W_1^\top \diag(\sigma'' (W_1 \bu_1 + b_1) \odot \bw) W_1 J_0,\\
    J_0 &= W_0^\top \diag(\sigma'(W_0 \bs + b_0)).
\end{align*}
\end{subequations}

Here $\odot$ represents the element-wise multiplication. Then, $\nabla_\bx \phi_\theta(\bx,\tau)$ is the first $d$ elements of $\nabla_\mathbf{s} \phi_\theta(\bx,\tau)$, and $\nabla_\bx^2\phi_\theta(\bx,\tau) = [\nabla^2_\mathbf{s} \phi_\theta(\bx,\tau)]_{1:d,1:d}$ where the subscript $1:d,1:d$ denotes the submatrix of size $d\times d$. For comprehensive expressions of the gradient and Hessian operators for a general $L$, we recommend readers to see details in \cite{onken2021ot}.

We now provide a summary of the proposed algorithms. Algorithm 2 serves as the primary algorithm, guiding the movement of particles along the gradient flow. Whenever sampling/resampling is necessary, Algorithm 1 will be invoked. Please note that the $l$ update and density computation in Algorithm 1 are only required when working with general nonlinear internal energy and mobility. Additionally, in the algorithm, we utilize forward Euler, but it is adaptable for replacement with explicit Runge-Kutta type ODE solvers.

\begin{algorithm}
\caption{Generate samples and densities at $n$-th JKO step}\label{alg:sampling}
\begin{algorithmic}
\State \textbf{Input:} 
\begin{itemize}
    \item Outer JKO time step $\Delta t$.
    \item Inner dynamic formulation time step $d\tau$ and total number of inner steps $N_\tau$.
    \item Initial distribution $\rho^0$ and initial samples $.\{\bx_j^0\}_{j=1}^N$ and densities $\{\rho(\bx_j^0)\}_{j=1}^N$
    \item Potential functions from previous steps $(\phi_\theta^0, \cdots, \phi_\theta^{n-1} )$.
\end{itemize}

\State \textbf{Output:} 
\begin{itemize}
    \item Sampled points $ \{ \mathbf{x}^{n}_j \}^N_{j=1}$ at $n$-th outer iteration.
    \item Density values $ \{\rho(\bx_j^n)\}^N_{j=1}$ at $n$-th outer iteration.
\end{itemize}

\bigskip

\For{$i=0,\cdots,n-1$}
    \For{$j=1,\cdots,N$}
        \State $\mathbf{z}^0_j = \mathbf{x}^i_j$, $l^0_j = \log \left|\det\left(\nabla_\bx^2 \phi_\theta(k d\tau, \mathbf{z}_j^k ) \right)\right|$ 
        \For{$k=0,\cdots,N_\tau-1$}
\State $\mathbf{z}_j^{k+1} = \mathbf{z}_j^k - d\tau\, \nabla_\bx \phi_\theta(k d\tau, \mathbf{z}_j^k)$
\State
$l^{k+1}_j = l^k_j - d\tau \,\tr ( \nabla_\bx^2 \phi_\theta(k d\tau, \mathbf{z}_j^k ))$
        \EndFor
        \State $\mathbf{x}^{i+1}_j = \mathbf{z}^{N_\tau}_j$, $ \rho(\bx^{i+1}_j) = \frac{\rho^n(\mathbf{x}_j)}{\exp(l^{N_\tau}_j)}$
    \EndFor
\EndFor
\end{algorithmic}
\end{algorithm}

\begin{algorithm}[ht!]
\caption{Deep JKO scheme}\label{alg:cap}
\begin{algorithmic}
\State \textbf{Input:}
\begin{itemize}
\item Outer JKO time step $\Delta t$
\item Inner dynamic formulation time step $d\tau$
\item Initial distribution $\rho^0$
\item Learning rate $\alpha$ for the Adam optimizer
\item Total number of outer iterations $K$
\item Error tolerance $TOL$ for the optimization process
\item Resampling frequency $C$
\end{itemize}

\State \textbf{Output:} 
\begin{itemize}
    \item Neural network parameter $\theta$ and corresponding potential function $\phi_\theta^n(t,\bx)$, $n= 1,2 , \cdots, K$
    \item Particle locations: $\{\bx_j^n\}$, $n= 1,2 , \cdots, K$, $j = 1, 2, \cdots, N$.
    \item Density along particle trajectories: $\rho^n (\bx_j^n)$, $n= 1,2 , \cdots, K$, $j = 1, 2, \cdots, N$.
\end{itemize}

\bigskip

\State Initialize $\theta$ from a standard normal distribution.

\bigskip

\For{$n=1,\cdots,K$}
    \State $c = 0$.
    \While{error $> TOL$}
        \If {$mod(c,C)$ = 0} \textbf{ do}
            \State Sample $\{\bx_j^0\}^N_{j=1} \overset{\text{i.i.d.}}{\sim} \rho^0$
            \State Compute $\{\bx_j^n\}^N_{j=1}$ using  Algorithm~\ref{alg:sampling}
            and  update the density value through  \eqref{eq:update-density}.
        \EndIf
        \State Update $\theta \gets \theta - \alpha \nabla_{\theta} L(\theta, \{\bx_j^n\}^N_{j=1})$
        \State where $L$ is from~\eqref{eq:discrete-NN} and the gradient is  computed using automatic 
        \State differentiation (AD).
        \State $c \gets c + 1$
    \EndWhile
\EndFor
\end{algorithmic}
\end{algorithm}
In Algorithm~\ref{alg:cap}, the error in the stopping condition is defined as
\begin{align*}
    \text{error} = \frac{|L(\theta^{(current)},\{\bx_j^n\}^N_{j=1}) - L(\theta^{(previous)},\{\bx_j^n\}^N_{j=1})|}{|L(\theta^{(previous)},\{\bx_j^n\}^N_{j=1})|}
\end{align*}
where $\theta^{(previous)}$ and $\theta^{(current)}$ are the parameters computed at the previous and current iterations.

\section{Numerical examples}\label{sec4}
This section presents several numerical examples of the proposed deep JKO schemes for computing gradient flows with various choices of energy functionals and mobility functions. 


\subsection{Wasserstein gradient flow of the KL divergence}

In this experiment, we present the computed solutions of~\eqref{JKO-NN2} with the energy functional defined as
\begin{align} \label{KL}
    U(\rho) = \mathrm{D}_{\mathrm{KL}}(\rho\|q)=\int \rho(x)\log\frac{\rho(x)}{q(x)}dx,
\end{align}
where $q$ represents a reference density.
The initial density $\rho^0$ is characterized by two Gaussian distributions centered at  $(\pm 1.2, 0)$ with a standard deviation of $\sigma = 0.5$. The reference density $q$ is defined as four Gaussian distributions centered at $(\pm 2, \pm 2)$ with a standard deviation of $\sigma = 0.5$.

For the numerical experiment, the inner timestep is set as $N_\tau=1$ and the JKO time stepsize is set as $\Delta t=0.025$. The model architecture includes 3 layers with $m=64$ nodes in the hidden layers. The training is performed using a batch size of $1000$, and the learning rate is chosen to be $10^{-5}$. The total number of iterations is set to $10,000$.

The two-dimensional results are depicted in Figure~\ref{fig:rho-log-rho} and Figure~\ref{fig:rho-log-rho-3d}. Figure~\ref{fig:rho-log-rho} illustrates the evolution of the density from $t=0$ to $t=0.4$. In this figure, the color of the points represents the value of the density $\rho^{n}(x)$ at the corresponding location $x$. Brighter colors indicate higher density values. The computation involves a total of 16 JKO steps. Figure~\ref{fig:rho-log-rho-3d} displays the trajectories of each particle with respect to time $t$ and the energy value computed from the algorithm with respect to the iterations. As the iterations progress, the energy decreases and eventually converges to the stationary value.

We further extended our experiments to a $10$-dimensional space while retaining the same energy functional \eqref{KL}. The initial density $\rho^0$ was set as a Gaussian distribution centered at the origin, and the reference density $q$ was constructed using a combination of four Gaussian distributions centered at $(2,2.5)$, $(-2.5,2)$, $(-2,-2.5)$, and $(2.5,-2)$. In this experimental setup, we utilized a time step size of $\tau=0.2$, a learning rate of $10^{-5}$, and executed a total of 10,000 iterations. The results are illustrated in Figure~\ref{fig:rho-log-rho-10d}, highlighting the initial 2D cross sections of the density's evolution from $t=0$ to $t=3.4$. These visual representations present kernel density plots derived from the point clouds generated by the algorithm at each time instance $t$. 

\begin{figure}[th!]
     \centering
     \begin{subfigure}[b]{0.19\textwidth}
         \centering
         \includegraphics[width=\textwidth]{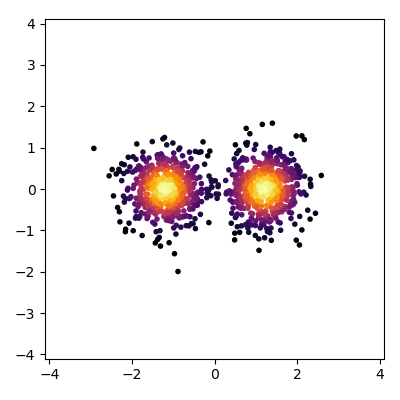}
         \caption{$t=0$}
     \end{subfigure}
     \hfill
     \begin{subfigure}[b]{0.19\textwidth}
         \centering
         \includegraphics[width=\textwidth]{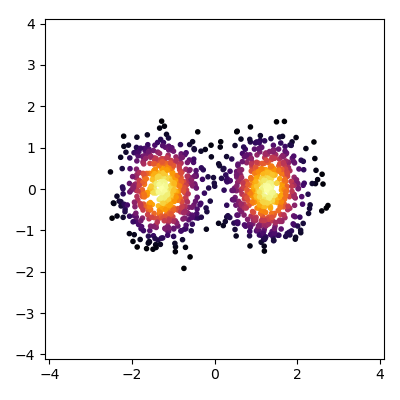}
         \caption{$t=0.025$}
     \end{subfigure}
     \hfill
     \begin{subfigure}[b]{0.19\textwidth}
         \centering
         \includegraphics[width=\textwidth]{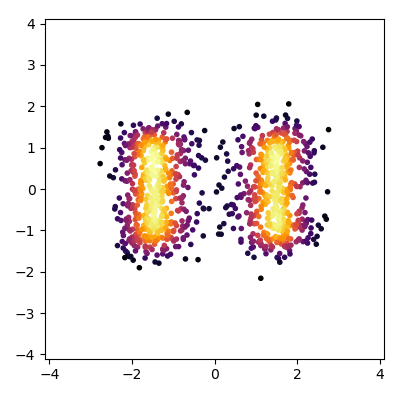}
         \caption{$t=0.1$}
     \end{subfigure}
     \begin{subfigure}[b]{0.19\textwidth}
         \centering
         \includegraphics[width=\textwidth]{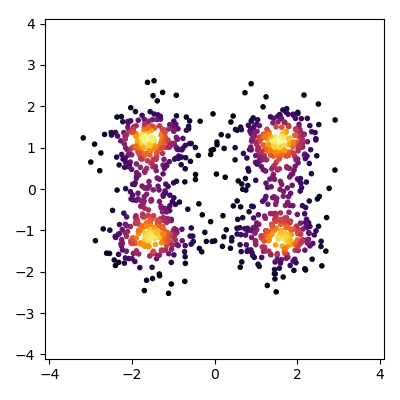}
         \caption{$t=0.2$}
     \end{subfigure}
     \begin{subfigure}[b]{0.19\textwidth}
         \centering
         \includegraphics[width=\textwidth]{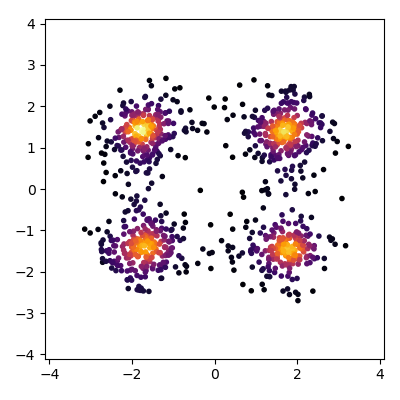}
         \caption{$t=0.4$}
     \end{subfigure}

        \caption{Computed solutions of Fokker-Planck equation, i.e. the Wasserstein gradient flow of KL divergence \eqref{KL}. An initial density is a mixture of two Gaussians and the target density is a mixture of four Gaussians.  The figures visually depict the evolution of these densities from $t=0$ to $t=0.4$.}
        \label{fig:rho-log-rho}
\end{figure}

\begin{figure}[th!]
     \centering
     \begin{subfigure}[b]{0.45\textwidth}
         \centering
         \includegraphics[width=\textwidth]{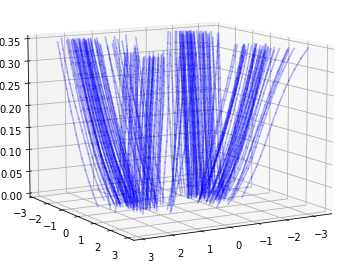}
     \end{subfigure}
     \begin{subfigure}[b]{0.50\textwidth}
         \centering
         \includegraphics[width=\textwidth]{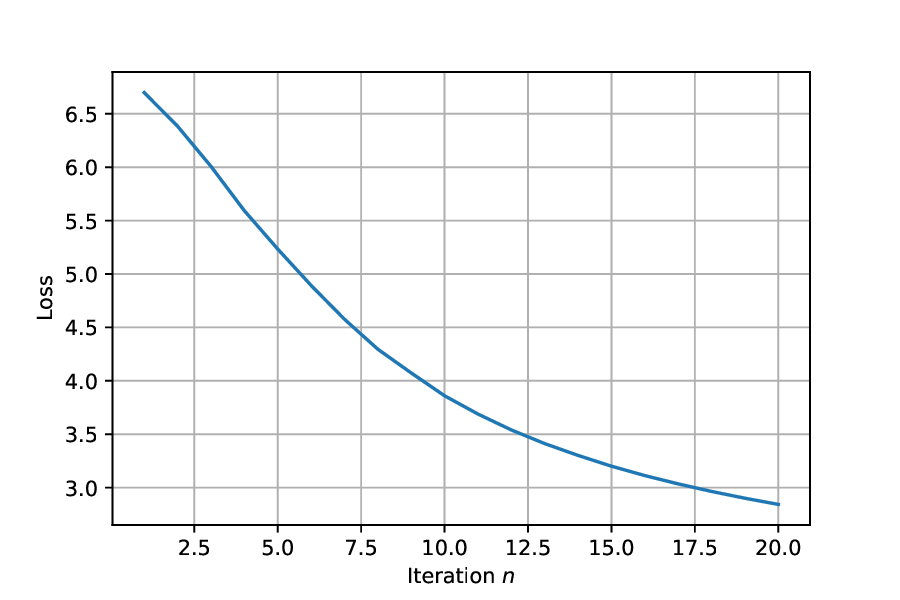}
     \end{subfigure}

        \caption{3D plot (left) and the energy decay plot (right) from Figure~\ref{fig:rho-log-rho}. The left plot shows the trajectories of each particles where $z$-axis represents time from $t=0$ to $t=0.35$. The right plot illustrates the decay of energy over 20 outer iterations (i.e., JKO steps) from $t=0$ to $t=0.5$.}
        \label{fig:rho-log-rho-3d}
\end{figure}

\begin{figure}[th!]
     \centering
     \begin{subfigure}[b]{0.19\textwidth}
         \centering
         \includegraphics[width=\textwidth]{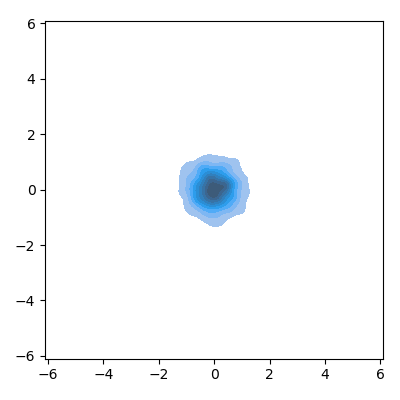}
         \caption{$t=0$}
     \end{subfigure}
     \hfill
     \begin{subfigure}[b]{0.19\textwidth}
         \centering
         \includegraphics[width=\textwidth]{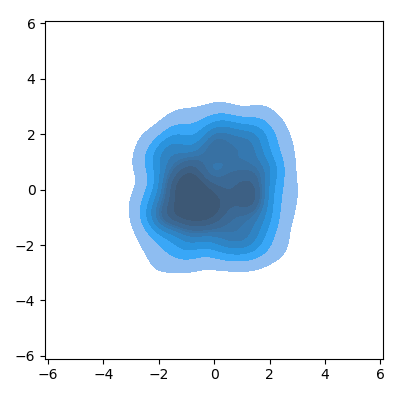}
         \caption{$t=0.6$}
     \end{subfigure}
     \hfill
     \begin{subfigure}[b]{0.19\textwidth}
         \centering
         \includegraphics[width=\textwidth]{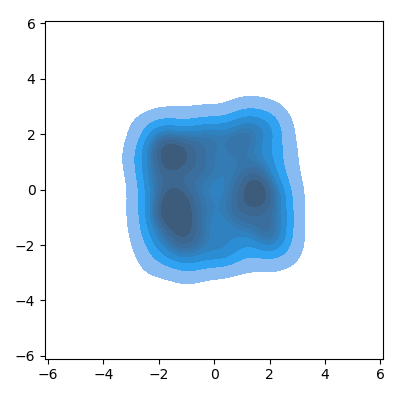}
         \caption{$t=1$}
     \end{subfigure}
     \begin{subfigure}[b]{0.19\textwidth}
         \centering
         \includegraphics[width=\textwidth]{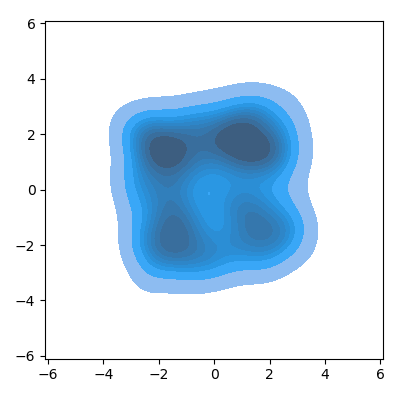}
         \caption{$t=1$}
     \end{subfigure}
     \begin{subfigure}[b]{0.19\textwidth}
         \centering
         \includegraphics[width=\textwidth]{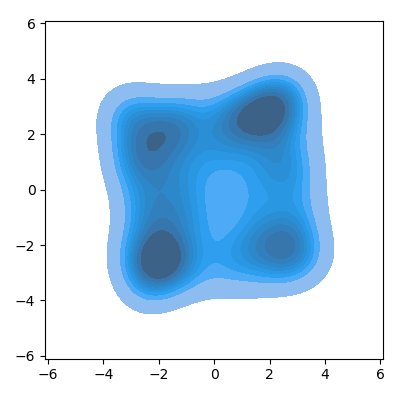}
         \caption{$t=3.4$}
     \end{subfigure}

        \caption{Evolution of densities to  the  Fokker-Planck equation in $\mathbb{R}^{10}$ from $t=0$ to $t=3.4$. An initial density is a Gaussian distribution centered at the origin and the target density is a mixture of four Gaussians.}
        \label{fig:rho-log-rho-10d}
\end{figure}


\subsection{2D porous medium equation}
The second experiment concerns the porous medium equation represented by
\begin{equation}\label{eq:porous-pde}
    \partial_t \rho(t,\bx) = \Delta \rho(t,\bx)^m, \quad m >1\,, \bx \in \RR^d\,,
\end{equation}
which can be interpreted as the Wasserstein gradient flow with the internal energy functional $U$ defined as
\begin{align*}
    U(\rho) = \int_{\mathbb{R}^d} \frac{1}{m-1}\rho(\bx)^m d\bx.
\end{align*}
This equation admits a close-form solution given by the Barenblatt formula: 
\begin{align}\label{BB}
    \rho(t,\bx) = (t+t_0)^{-\alpha} \left( C - \beta \|\bx\|^2 (t+t_0)^{-\frac{2\alpha}{d}}\right)_{+}^{\frac{1}{m-1}} ,
\end{align}
where $\alpha = \frac{d}{d(m-1)+2}$, $\beta = \frac{(m-1)\alpha}{2dm}$, $C$ is a constant that makes $\int_{\RR^d} \rho(t,\bx) \rd x = 1$. We choose $t_0 = 10^{-3}$. Formula \eqref{BB} serves as a reference solution to evaluate the performance of our algorithm.

In the numerical experiment, we employ an inner timestep of $N_\tau=2$ and set the JKO time step size to $\Delta t=0.001$. The model architecture consists of 3 layers with $128$ nodes in each hidden layer. During training, a batch size of 1000 is used, and the learning rate is set at $10^{-5}$. The total number of iterations for the experiment is configured to be  $10,000$.

The experimental results are presented in Figure~\ref{fig:porous1} and Figure~\ref{fig:porous-3d}. Figure~\ref{fig:porous1} visualizes the density evolution from $t=0$ to $t=0.12$, where particles positions $\bx$ and their corresponding density values, $\rho(t, \bx)$, are accurately computed at each time step. The color intensity at each particle's location represents its density, with brighter colors indicating higher density values. Notably, the computed solutions maintain radial symmetry, providing a precise representation. The green rings in the figures correspond to the support of the analytical solution \eqref{BB} and align well with the computed solutions, affirming the algorithm's accuracy. Figure~\ref{fig:porous-3d} displays particle trajectories from $t=0$ to $t=0.01$, exhibiting linear trajectories as expected from the analytical solution of the PDE.

We then utilize the algorithm to compute solutions for the porous medium equation in high-dimensional spaces, specifically in dimensions 6 and 10. For both cases, we choose an outer time step size of $\Delta t=0.0005$, and set the inner timestep to $N_\tau=2$. The model architecture comprises three layers with $128$ nodes in the hidden layers. Training is carried out using a batch size of 1000, and the learning rate is fixed at $10^{-6}$. The total number of iterations is set to be $20,000$.

Figure~\ref{fig:porous10d} illustrates the evolution of densities in both 6D and 10D. To visualize the results and demonstrate the accuracy of the algorithm, 2D cross-section plots of the particles are presented, along with the support of the densities obtained from the analytical solution, represented as a green ring. The computed densities exhibit spherical evolution and closely match the analytical solutions.



\begin{figure}[th!]
    \centering
    \begin{subfigure}[b]{0.19\textwidth}
        \centering
        \includegraphics[width=\textwidth]{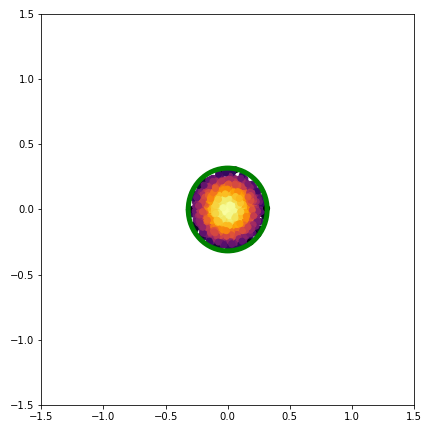}
        \caption{$t=0$}
    \end{subfigure}
    \hfill
    \begin{subfigure}[b]{0.19\textwidth}
        \centering
        \includegraphics[width=\textwidth]{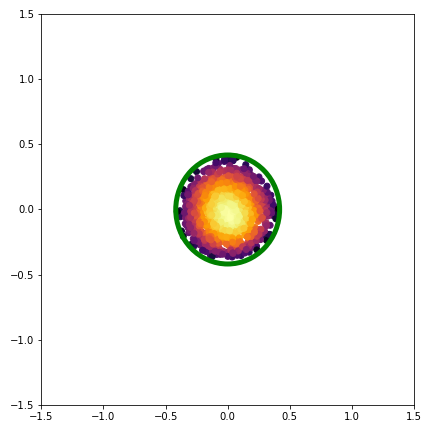}
        \caption{$t=0.002$}
    \end{subfigure}
    \hfill
    \begin{subfigure}[b]{0.19\textwidth}
        \centering
        \includegraphics[width=\textwidth]{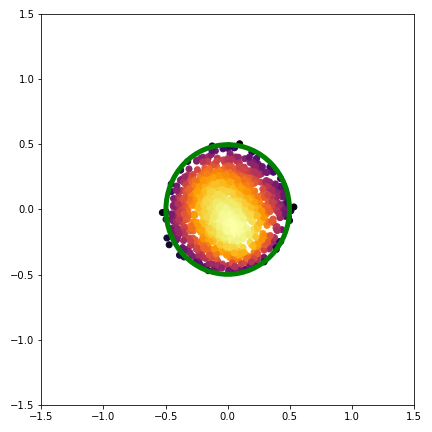}
        \caption{$t=0.005$}
    \end{subfigure}
    \begin{subfigure}[b]{0.19\textwidth}
        \centering
        \includegraphics[width=\textwidth]{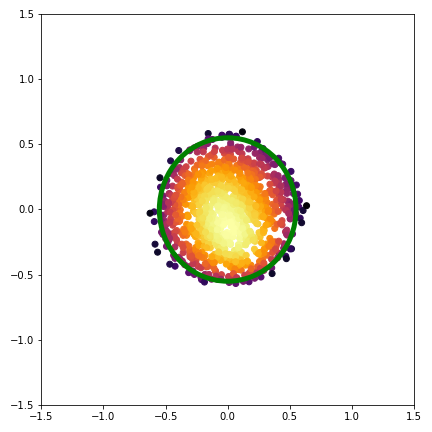}
        \caption{$t=0.008$}
    \end{subfigure}
    \begin{subfigure}[b]{0.19\textwidth}
        \centering
        \includegraphics[width=\textwidth]{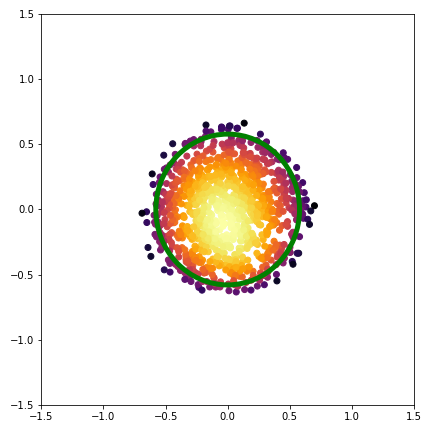}
        \caption{$t=0.01$}
    \end{subfigure}

       \caption{Computed solutions of 2D porous medium equation. The figures show the evolution of densities from $t=0$ to $t=0.12$. The algorithm calculates the positions of particles $\bx \in \mathbb{R}^2$ and their corresponding density values $\rho(t, \bx)$ at each time $t$. The color of each particle represents its density value, with brighter colors indicating higher density values. The green ring indicates the support of the density from the analytical solution at the given time $t$.}
       \label{fig:porous1}
\end{figure}

\begin{figure}[th!]
     \centering
     \begin{subfigure}[b]{0.45\textwidth}
         \centering
         \includegraphics[width=\textwidth]{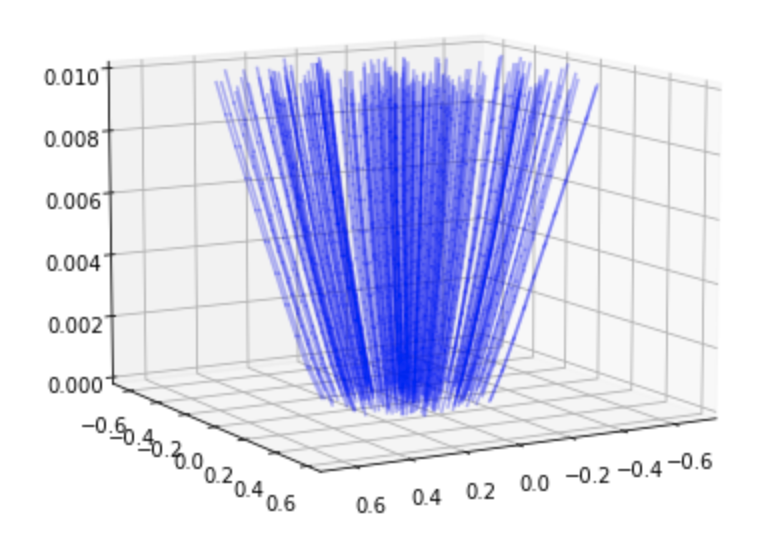}
     \end{subfigure}
        \caption{3D plot from Figure~\ref{fig:porous1}. It shows the trajectories of each particles where $z$-axis represents time from $t=0$ to $t=0.01$.}
        \label{fig:porous-3d}
\end{figure}

\begin{figure}[th!]
    \centering
    \begin{subfigure}[b]{0.19\textwidth}
        \centering
        \includegraphics[width=\textwidth]{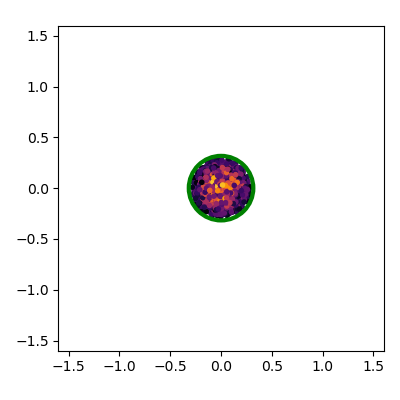}
        \caption{$t=0$}
    \end{subfigure}
    \hfill
    \begin{subfigure}[b]{0.19\textwidth}
        \centering
        \includegraphics[width=\textwidth]{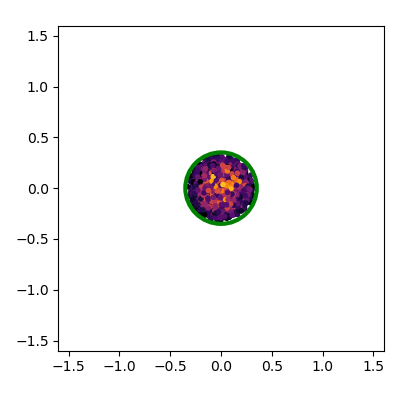}
        \caption{$t=0.0005$}
    \end{subfigure}
    \hfill
    \begin{subfigure}[b]{0.19\textwidth}
        \centering
        \includegraphics[width=\textwidth]{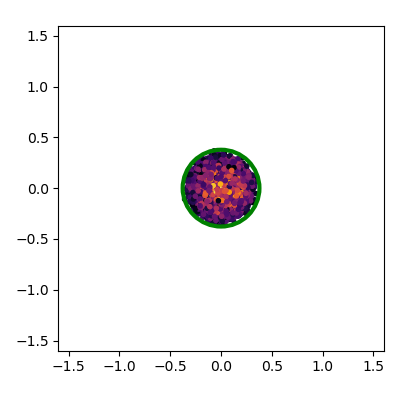}
        \caption{$t=0.001$}
    \end{subfigure}
    \begin{subfigure}[b]{0.19\textwidth}
        \centering
        \includegraphics[width=\textwidth]{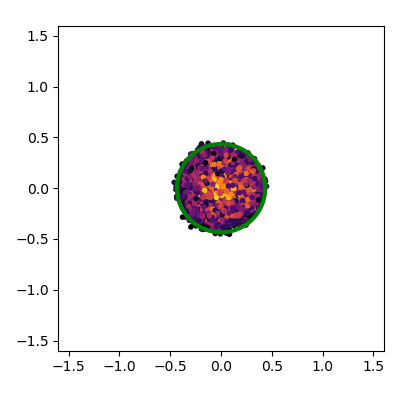}
        \caption{$t=0.0025$}
    \end{subfigure}
    \begin{subfigure}[b]{0.19\textwidth}
        \centering
        \includegraphics[width=\textwidth]{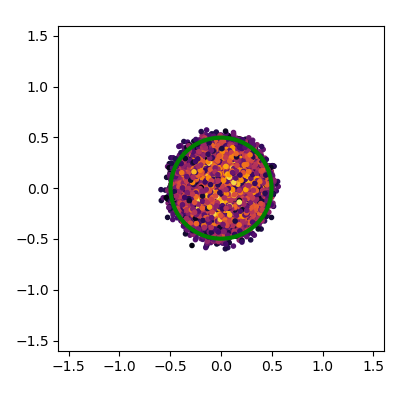}
        \caption{$t=0.005$}
    \end{subfigure}

    \begin{subfigure}[b]{0.19\textwidth}
        \centering
        \includegraphics[width=\textwidth]{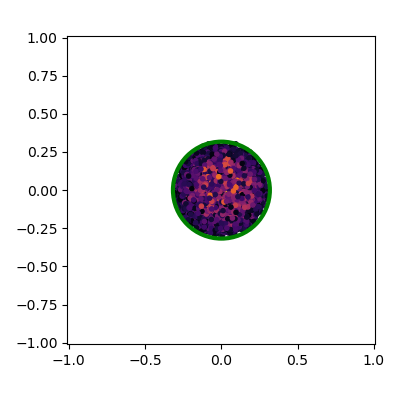}
        \caption{$t=0$}
    \end{subfigure}
    \hfill
    \begin{subfigure}[b]{0.19\textwidth}
        \centering
        \includegraphics[width=\textwidth]{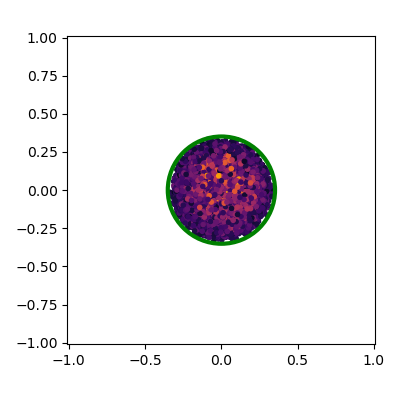}
        \caption{$t=0.0005$}
    \end{subfigure}
    \hfill
    \begin{subfigure}[b]{0.19\textwidth}
        \centering
        \includegraphics[width=\textwidth]{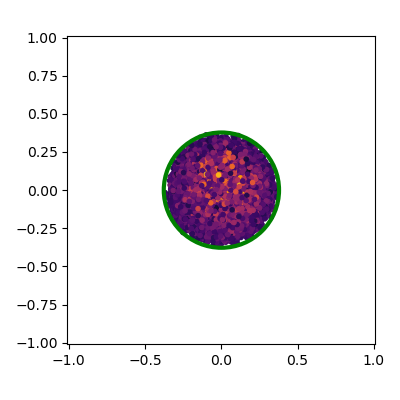}
        \caption{$t=0.001$}
    \end{subfigure}
    \begin{subfigure}[b]{0.19\textwidth}
        \centering
        \includegraphics[width=\textwidth]{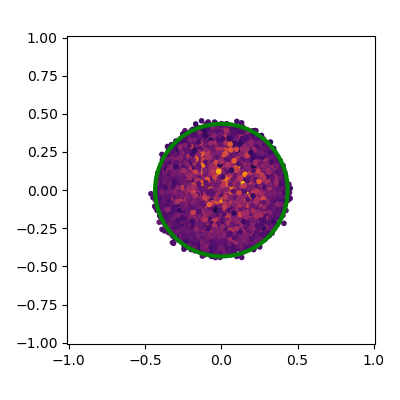}
        \caption{$t=0.0025$}
    \end{subfigure}
    \begin{subfigure}[b]{0.19\textwidth}
        \centering
        \includegraphics[width=\textwidth]{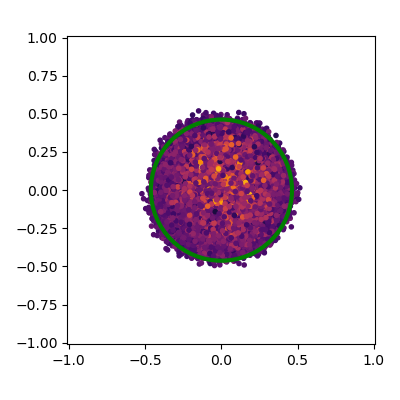}
        \caption{$t=0.0035$}
    \end{subfigure}
       \caption{Computed solutions of 6D (first row) and 10D (second row) porous medium equation. The figures show the evolution of densities from $t=0$ to $t=0.005$ for 6D and $t=0$ to $t=0.0035$ for 10D porous medium flows. The green ring indicates the support of the density from the analytical solution at the given time $t$.}
       \label{fig:porous10d}
\end{figure}




\subsection{Nonlocal transport equation with nonlinear mobility}
In this section, we explore an example that involves nonlinear mobility, a phenomenon frequently encountered in the study of transport in biological systems, where it serves to prevent overcrowding \cite{burger2006keller}. Particularly, we examine a specific mobility function, represented as $M(\rho) = \rho(1-\rho)$, and a particular energy functional described by the equation:
\begin{equation} 
\energy (\rho) = \half \int_{\RR^d \times \RR^d} W(\bx-\by) \rho(\bx) \rho(\by) \rd \bx \rd \by\,, \quad W(x) = |x|^2\,.
\end{equation}
The corresponding equation can be expressed as:
\begin{equation} \label{1105}
\partial_t\rho(t,\bx) = \nabla\cdot\Big(M(\rho(t,\bx))\nabla \frac{\delta \energy}{\delta \rho}(t,\bx)\Big)\,.
\end{equation}
This model is based on \cite{fagioli2022gradient}. Beginning with an initial condition $\rho_0(x) = \frac{3}{4} (1-\|x\|^2)_+$, it is anticipated that  $\rho_\infty$ will take the form of a characteristic function whose support is determined by the total mass. The experimental results are presented in Figure~\ref{fig:nonlinear}. The top row of figures illustrates particle evolution from a bird's-eye view, where each particle's position $\bx$ is color-coded according to its associated density value, $\rho^n(\bx)$. The bottom row of figures presents a side view of the density values for each particle when their locations are projected onto the $x$-axis. These visualizations provide compelling evidence of the density converging toward a characteristic function.

\begin{figure}[ht!]
     \centering
     \begin{subfigure}[b]{0.24\textwidth}
         \centering
         \includegraphics[width=\textwidth]{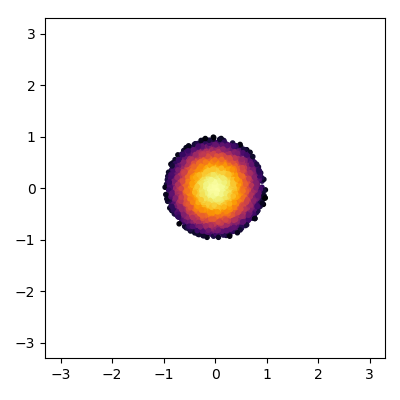}
         \caption{$t=0$}
     \end{subfigure}
     \hfill
     \begin{subfigure}[b]{0.24\textwidth}
         \centering
         \includegraphics[width=\textwidth]{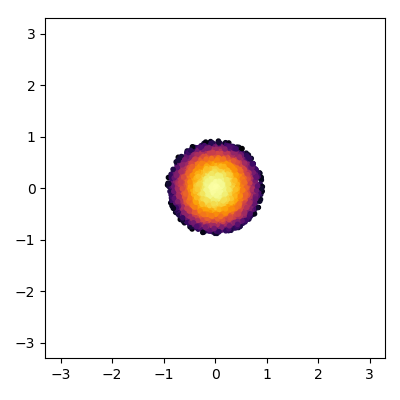}
         \caption{$t=0.02$}
     \end{subfigure}
     \hfill
     \begin{subfigure}[b]{0.24\textwidth}
         \centering
         \includegraphics[width=\textwidth]{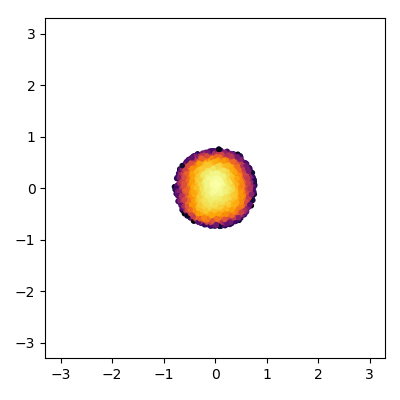}
         \caption{$t=0.1$}
     \end{subfigure}
     \begin{subfigure}[b]{0.24\textwidth}
         \centering
         \includegraphics[width=\textwidth]{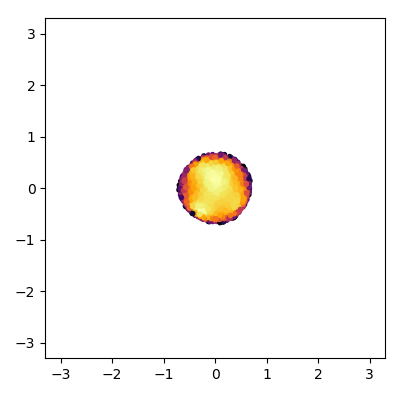}
         \caption{$t=0.2$}
     \end{subfigure}

     \begin{subfigure}[b]{0.24\textwidth}
         \centering
         \includegraphics[width=\textwidth]{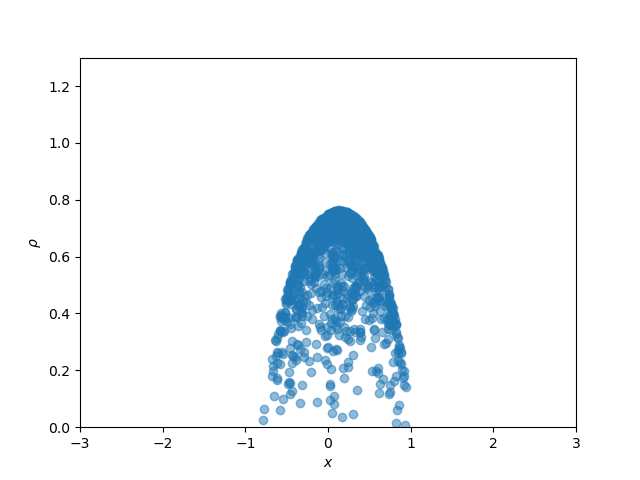}
         \caption{$t=0$}
     \end{subfigure}
     \hfill
     \begin{subfigure}[b]{0.24\textwidth}
         \centering
         \includegraphics[width=\textwidth]{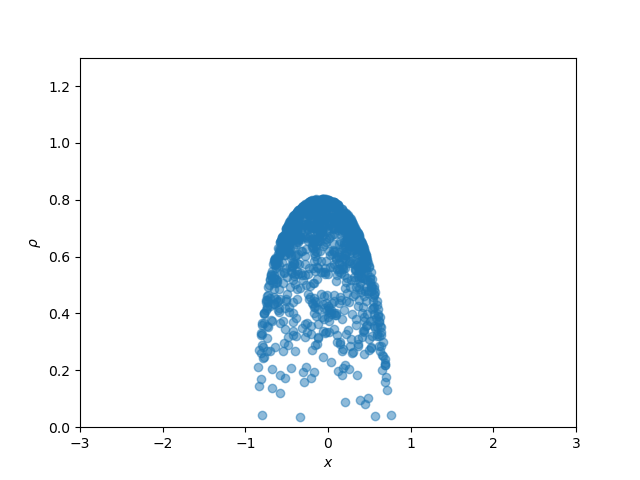}
         \caption{$t=0.02$}
     \end{subfigure}
     \hfill
     \begin{subfigure}[b]{0.24\textwidth}
         \centering
         \includegraphics[width=\textwidth]{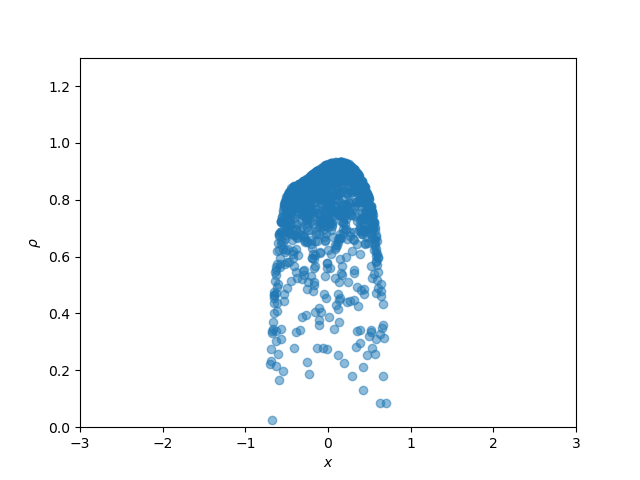}
         \caption{$t=0.1$}
     \end{subfigure}
     \begin{subfigure}[b]{0.24\textwidth}
         \centering
         \includegraphics[width=\textwidth]{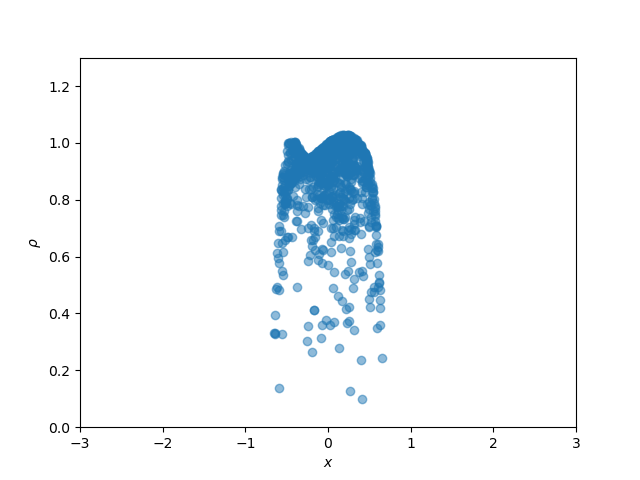}
         \caption{$t=0.2$}
     \end{subfigure}
        \caption{Computed solutions of nonlocal transport equation with nonlinear mobility \eqref{1105}.
        The figures show the evolution of densities from $t=0$ to $t=0.2$. Top row: bird eye view with color coded density. Bottom row: particle density (on the $y$-axis) versus its projected location on the $x$-axis. }
        \label{fig:nonlinear}
\end{figure}

\subsection{Kalman-Wasserstein gradient flow}
In the last example, we consider the Kalman-Wasserstein gradient flow \cite{Garbuno-InigoHoffmannLiStuart2020_interactingb}:
\begin{equation}\label{KWGF}
\partial_t\rho(t,u) = \nabla_u\cdot\Big(\rho(t,u) C(\rho)\nabla_u \frac{\delta \energy}{\delta \rho}(t,u)\Big),
\end{equation}
where
$\bu\in\mathbb{R}^d$ is the parameter in the Bayesian inverse problem, 
\begin{align}
    C(\rho) = \int_{\mathbb{R}^d} (\bu-\bm(\rho))\otimes (\bu-\bm(\rho)) \rho(\bu) \rd \bu\,,
    \quad
    \bm(\rho) = \int_{\mathbb{R}^d} \bu \rho(\bu) \rd \bu\,,
\end{align}
and 
\begin{align}\label{relative_entropy}
    \energy(\rho) = \mathrm{D}_{\mathrm{KL}}(\rho\|\pi) = \int_{\mathbb{R}^d} \Big(\Phi(\bu) \rho + \rho(\bu) \ln \rho(\bu)\Big) \rd\bu+\ln Z \,.
\end{align}
Here $\rho$ is a probability density function of $u$, $\pi=\frac{1}{Z} e^{-\Phi(\bu)}$ is a given target distribution and we assume $Z=\int e^{-\Phi(\bu)}d\bu<+\infty$ is a normalization constant. 

This equation is derived as a mean field limit of the Kalman Wasserstein sampler, aiming at identifying the unknown parameter $\bu$ in the Bayesian setting. In particular, $\Phi(\bu)$  takes the form
\begin{align} \label{Bay1}
    \Phi(\bu) = \half \norm{\mathcal G (\bu)- \by}_\Gamma^2  + \half \norm{\bu}_{\Gamma_0}^2\,,
\end{align}
where $\norm{\bu}_\Amat := \bu^\top \Amat^{-1} \bu$, 
$\mathcal G$ represents a forward map that relates the parameter $\bu$ to the measurement $\by$, typically represented as a PDE operator. We assume that $\bu$ follows a prior distribution $\mathcal N(0, \Gamma_0)$. The term $\frac{1}{2} \norm{\mathcal G (\bu)-\by}_\Gamma^2$ is commonly referred to as the likelihood function, and $\frac{1}{Z} e^{-\Phi(\bu)}$, with $Z$ being the normalizing constant, represents the posterior distribution.

Then our goal of solving \eqref{KWGF} is to sample from the posterior distribution. Instead of running a Lagenvin type dynamics as proposed in \cite{Garbuno-InigoHoffmannLiStuart2020_interactingb}, we use our DeepJKO formulation. 
Note that $C(\rho)$ dose not have an explicit $\bu$ dependence,  \eqref{non1} simplifies to 
\begin{align*} 
     & \int_{\RR^d}  \rho(\tau,\bu) \bv(\tau,\bu)^\top  
     C(\rho(\tau,\bu))^{-1}
     \bv(\tau,\bu)
     \rd \bu  \nonumber
   \\ = & \int_{\RR^d} 
\bv(\tau,\bT(\tau,\bu))^\top C (\rho(\tau, \bT(\tau,\bu)))^{-1} \bv(\tau,\bT(\tau,\bu))  \rho^n(\bu)  \rd \bu
  \\ = &  \int_{\RR^d} 
\bv(\tau,\bT(\tau,\bu))^\top C (\tau)^{-1} \bv(\tau,\bT(\tau,\bu))  \rho^n(\bu)  \rd \bu\,.
\end{align*}
It's important to highlight that $C(\tau)=C (\rho(\tau, \bT(\tau,\bu)))$ is reliant on the moments of $\rho$ and thus remains unaffected by variations in $u$.
Consequently, the JKO scheme \eqref{Mob-JKO} rewrites to:

\begin{equation}  \label{1009}
    \begin{dcases}
    \min_{\theta}  \frac{1}{N}\sum_{j=1}^N \Big\{  \int_0^1  \bv_\theta (\tau, \bT(\tau,\bu_j )^\top C(\tau)^{-1} \bv_\theta (\tau, \bT(\tau,\bu_j )) \rd \tau  
    \\ \hspace{3cm}  + 2 \Delta t \left[\Phi(\bT(1, \bu_j)) { + } \log\rho^n(\bu_j)-\log \mathrm{det}(\nabla_u \bT(1,\bu_j)) \right] \Big\}\, , 
    \\ \text{where}~~ C(\tau) = \frac{1}{N} \sum_{j=1}^N (\bT(\tau, \bu_j) -\bm ) \otimes (\bT(\tau, \bu_j) -\bm) \,,
    \quad
    \bm = \frac{1}{N} \sum_{j=1}^N \bT(\tau, \bu_j)\, ,
    \\ \text{s.t.} ~~~\frac{\rd}{\rd \tau} \bT(\tau, \bu_j) = \bv_\theta  (\tau, \bT(\tau,\bu_j))\, , \qquad \bT(0,\bu_j) = \bu_j\,. 
   \\ \qquad     \frac{\partial}{\partial \tau} \log \det (\nabla_{\bu} \bT(\tau, \bu_j))= \divergence (\bv) (\tau, \bT(\tau,\bu_j))\,, \quad \log \det (\nabla_{\bu} \bT(0, \bu_j))=0\,.
    \end{dcases}
\end{equation}

As a specific example, we consider a case where the forward map $\mathcal G$ is given by the one-dimensional elliptic boundary value problem:
\begin{align*}
    -\frac{\rd }{\rd x} \left( e^{u_1} \frac{\rd }{\rd x} p_{\bu}(x) \right) = 1, \quad  x \in [0,1] \,,
\end{align*}
with boundary conditions $p_{\bu}(0) = 0$ and $p_{\bu}(1) = u_2$. This problem has been considered in \cite{herty2019kinetic, Garbuno-InigoHoffmannLiStuart2020_interactingb, li2023differential}. The explicit solution for this problem is 
\begin{align*}
    p_{\bu}(x) = u_2 x  + e^{-u_1} \left( -\frac{x^2}{2} + \frac{x}{2}\right)\,. 
\end{align*}
Then the forward map $\mathcal G$ is defined by 
\begin{align}
    \mathcal G(\bu) = (p_{\bu}(0.25), p_{\bu}(0.75))^\top\,, \quad \text{where}~ \bu = (u_1, u_2)^\top\,.
\end{align}
Then the Bayesian inverse problem is to find the distribution of the unknown $\bu$ conditioned on the observation $\by$, assuming additive Gaussian noise. More precisely, we use $\Phi(\bu)$ defined in \eqref{Bay1} and choose 
\begin{align*}
  \by = (27.5, 79.7)^\top, \quad  \Gamma = 0.1^2 \Imat_2, \quad \Gamma_0 = 10^2 \Imat_2, 
\end{align*}
where $\Imat_2 \in \RR^{2\times 2}$ is the identity matrix. We run the dynamics by initially sampling according to 
\begin{align}
    \rho_0(\bu) = \mathcal N (0,1) \times \mathcal U(90,110)\,,
\end{align}
That is, sample $u_1$ according to normal distribution $\mathcal N (0,1) $, and $u_2$ according to uniform distribution $\mathcal U(90,110)$. 

In the computational approach, we set the matrix $C(\tau)^{-1}$ to be equal to $C(0)^{-1}$ to approximate the running costs. This approximation offers the advantage of reducing computational complexity by circumventing the need for backpropagation over the inverse cost function, thereby significantly increasing computation speed. Note that the approximation introduces an error of order $\Delta t$, which does not compromise the first order accuracy of the JKO scheme. The results of our experiment are presented in Figure~\ref{fig:kalman}, illustrating the density evolution from $t=0$ to $t=20$. The density converges to the stationary distribution, and the results are consistent with the numerical solution depicted in~\cite{Garbuno-InigoHoffmannLiStuart2020_interactingb}. 

Figure~\ref{fig:kalman-conv} shows the loss plot and the contour plot of $\Phi(\bu)$. The loss plot displays the value of the loss function with the x-axis representing iterations and the y-axis representing the loss value. Here the loss function represents the relative entropy defined in \eqref{relative_entropy}. 
On the right, the figure illustrates the computed solution at $t=20$, overlaid on the contour plot derived from $\Phi(\bu)$.
        \begin{figure}[th!]
            \centering
            \begin{subfigure}[b]{0.19\textwidth}
                \centering
                \includegraphics[width=\textwidth]{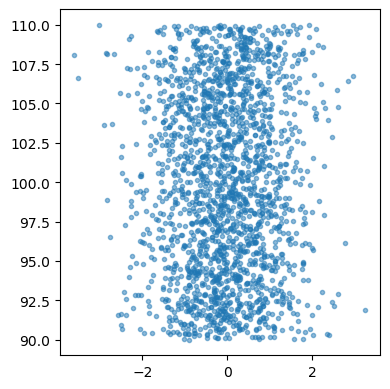}
                \caption{$t=0$}
            \end{subfigure}
            \hfill
            \begin{subfigure}[b]{0.19\textwidth}
                \centering
                \includegraphics[width=\textwidth]{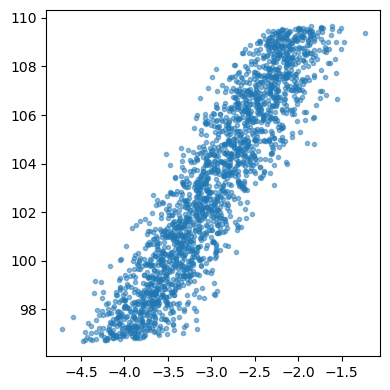}
                \caption{$t=2$}
            \end{subfigure}
            \hfill
            \begin{subfigure}[b]{0.19\textwidth}
                \centering
                \includegraphics[width=\textwidth]{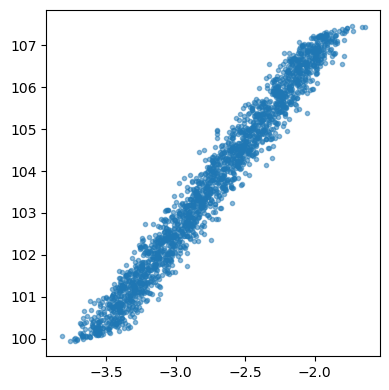}
                \caption{$t=4$}
            \end{subfigure}
            \begin{subfigure}[b]{0.19\textwidth}
                \centering
                \includegraphics[width=\textwidth]{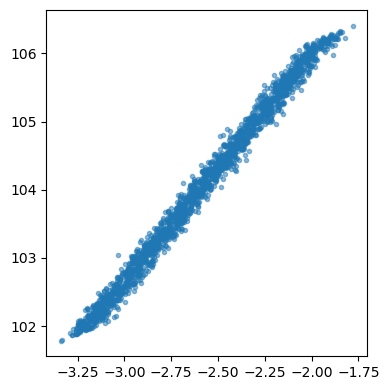}
                \caption{$t=7.5$}
            \end{subfigure}
            \begin{subfigure}[b]{0.19\textwidth}
                \centering
                \includegraphics[width=\textwidth]{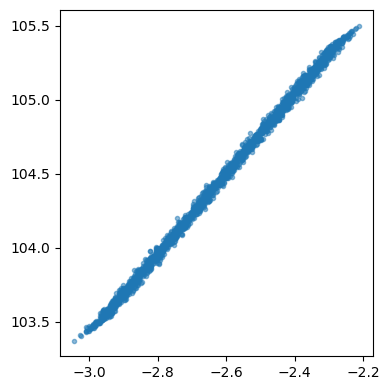}
                \caption{$t=20$}
            \end{subfigure}
               \caption{Computed solutions of a 2D Kalman-Wasserstein gradient flow \eqref{KWGF}. }
               \label{fig:kalman}
        \end{figure}
\begin{figure}[th!]
    \centering
    \begin{subfigure}[b]{0.5\textwidth}
        \centering
        \includegraphics[width=\textwidth]{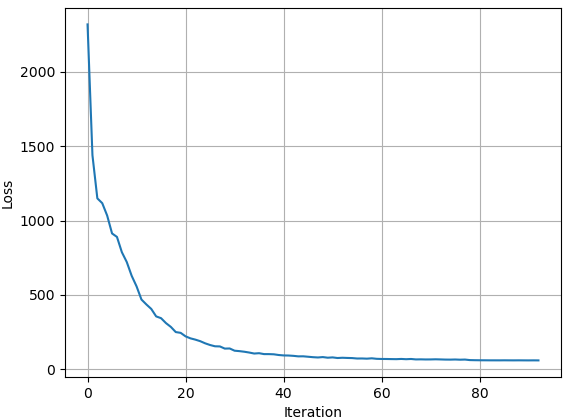}
        \caption{Loss plot with respect to outer iterations}
    \end{subfigure}
    \hfill
    \begin{subfigure}[b]{0.4\textwidth}
        \centering
        \includegraphics[width=\textwidth]{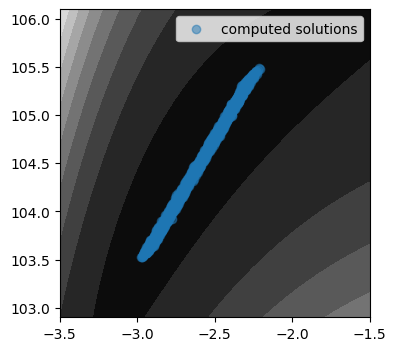}
        \caption{Computed solution and a contour plot of $\Phi(\bu)$}
    \end{subfigure}
    \hfill
       \caption{The loss function values across iterations are depicted on the x-axis, while the computed solution at $t=20$ is superimposed on the contour plot generated from $\Phi(\bu)$ (on the right).}
       \label{fig:kalman-conv}
\end{figure}

\section{Conclusion and discussion}\label{sec5}
In this paper, we introduce neural network-based implicit particle methods for computing high-dimensional Wasserstein-type gradient flows with linear and nonlinear mobility functions. Our approach centers around the Lagrangian formulation within the Jordan–Kinderlehrer–Otto (JKO) framework, where neural networks approximate the velocity field. Neural ODE techniques are harnessed to efficiently compute the time-implicit updates of both particle locations and the densities they carry. Additionally, we leverage an explicit recurrence relation to compute derivatives, significantly streamlining the backpropagation process. Our methodology exhibits versatility and can handle a broad spectrum of gradient flows while accommodating diverse potential functions and nonlinear mobility scenarios.

 There are several promising directions for future work. Firstly, there is potential for accelerating the training process. This could involve leveraging the properties of the velocity field as suggested in \cite{liu2022flow}, or exploring transfer learning techniques \cite{xu2023transfer}, and improving resampling methods.
Another avenue for exploration is the convexity of the problem, investigating how it depends on parameters like the step size $\Delta t$ and network architectures, including the choices of neural network activation functions. Understanding the convexity properties can contribute to faster training and provide insights into the convergence properties of the algorithm.
Additionally, we aim to extend this approach to accommodate more complex energy functionals and scenarios where a gradient flow structure is not fully presented or is only part of the physical dynamics. This extension would broaden the applicability of our method to a wider range of problems in simulating general physics oriented partial differential equations.
\bibliographystyle{siam} 
\bibliography{DeepJKO_ref.bib}
\end{document}